\newcommand{\N}{\ensuremath{\mathbb{N}}}
\newcommand{\R}{\ensuremath{\mathbb{R}}}
\newcommand{\C}{\ensuremath{\mathbb{C}}}
\renewcommand{\d}{\mathrm{d}}
\newcommand{\di}{\mathbf{d}}
\newcommand{\loc}{\mathrm{loc}}
\newcommand{\p}{\mathbf{p}}
\newcommand{\q}{\mathbf{q}}
\newcommand{\T}{\mathcal{T}}
\renewcommand{\Re}{\mathrm{Re}\,}
\renewcommand{\Im}{\mathrm{Im}\,}
\newcommand{\eps}{\varepsilon}
\renewcommand{\S}{\mathbb{S}}
\renewcommand{\H}{{\mathbf{H}}}
\newcommand{\E}{{\mathbf{E}}}
\newcommand{\Ei}{{\mathbf{E}_\mathrm{in}}}
\renewcommand{\u}{\mathbf{u}}
\renewcommand{\v}{\mathbf{v}}
\newcommand{\D}{{\mathbf{D}}}
\newcommand{\B}{{\mathbf{B}}}
\newcommand{\curl}{\mathrm{curl}\,}
\newcommand{\x}{\mathbf{x}}
\newcommand{\y}{\mathbf{y}}
\newcommand{\h}{\mathbf{h}}
\newcommand{\z}{\mathbf{z}}
\newcommand{\g}{\mathbf{g}}
\newcommand{\f}{\mathbf{f}}
\renewcommand{\u}{\mathbf{u}}
\renewcommand{\v}{\mathbf{v}}
\newtheorem{defi}{Definition}
\newtheorem{lemma}[defi]{Lemma}
\newtheorem{theorem}[defi]{Theorem}
\newtheorem{corollary}[defi]{Corollary}
\begin{document}
\title{On  orthogonality sampling method for Maxwell's equations \\ and its applications to experimental data}

\author{Thu Le\thanks{Department of Mathematics, University of Wisconsin-Madison, Madison, WI 53706; (\texttt{tle38@wisc.edu})} \and Dinh-Liem Nguyen\thanks{Department of Mathematics, Kansas State University, Manhattan, KS 66506; (\texttt{dlnguyen@ksu.edu})}   
}
\date{}
\maketitle

\begin{abstract}

This paper addresses the inverse scattering problem for Maxwell's equations. We first show that a bianisotropic scatterer can be uniquely determined from multi-static far-field data through the factorization analysis of the far-field operator. Next, we investigate a modified version of the orthogonality sampling method, as proposed in Le \textit{et al} [2022 \textit{Inverse Problems} 38 025007], for the numerical reconstruction of the scatterer. Finally, we apply this sampling method to invert unprocessed 3D experimental data obtained from the Fresnel Institute. Numerical examples with synthetic scattering data for bianisotropic targets are also presented to demonstrate the effectiveness of the method.

% The first goal of the paper is to extend 
% using unprocessed experimental data from the Fresnel Institute. We extend the MOSM, initially developed for anisotropic media \cite{Le2022}, to handle the more complex bianisotropic case. Factorization analysis is studied to establish the uniqueness of the inverse problem in reconstructing the scatterer's shape and location, and to provide theoretical justification for the behavior of the MOSM. Numerical results using both 3D synthetic and experimental data are presented. Despite the real dataset being sparse and limited-aperture, the MOSM operates directly and effectively on raw data without requiring pre-processing steps. This demonstrates that the method is practical, easy and very fast to implement, while maintaining robustness and accuracy even in the presence of high noise levels in the measured data.
%conclusion: 
\end{abstract}
\sloppy
%13 printed pages. 650 words. abstract of up to 200 words, 
{{{\bf Keywords.}
  Inverse scattering, Maxwell's equations,   bianisotropic media, orthogonality sampling, experimental data.}}

\bigskip
%apply also the modified osm for raw data, compare MOSM,OSM, FM,
\section{Introduction}
Inverse scattering problems for Maxwell’s equations arise in many applications, including non-destructive testing, radar, and medical imaging    {\cite{Kong1990, Colton2013}}. In these problems, one seeks to reconstruct certain information about an unknown target (the scatterer) from measurements of the scattered electromagnetic field. In this work, we focus on the inverse scattering problem for Maxwell’s equations in an inhomogeneous bianisotropic medium, using far-field scattering data at a single frequency. We aim to study a numerical method for  determining the location and shape of a 3D scatterer, and to validate its efficiency and robustness through tests on both synthetic and unprocessed experimental data.

Sampling methods, also known as qualitative approaches, are numerical techniques for solving inverse problems and have been extensively studied in recent years. Their key advantages include being non-iterative, computationally efficient, and not requiring a priori information about unknown targets. The linear sampling method \cite{Colton1996} is considered the first sampling method and
has inspired several related approaches, such as the factorization method \cite{Kirsch1998} and the point source method \cite{Potthast1996}. We refer the reader to \cite{Colton2013, Kirsch2008, Cakoni2006} for additional examples of sampling methods.
%recent osm dsm, etc. 

The orthogonal sampling method (OSM) is a recent development in sampling methods and was first introduced by Potthast in \cite{Potthast2010}. There are many studies using the OSM to solve the inverse problems for the Helmholtz equation (see, e.g., \cite{Ahn2020, Akinci2016, Griesmaier2011, Le2023}) and, more recently, for Maxwell’s equations (see \cite{Harris2020, Le2022, Nguyen2019}). The OSM is also known as the direct sampling method in other works (e.g., \cite{Ito2012, Park2018}). Compared to classical sampling methods, the OSM has a simpler implementation and is more robust against noise in the data, although its theoretical justification is less developed. A modified version of the OSM was proposed in \cite{Le2022} for Maxwell equations in isotropic and anisotropic media. The modified orthogonal sampling method (MOSM) can apply to more types of scattering data compared with the OSM studied in~\cite{Harris2020} and outperforms the OSM in inverting 3D experimental data from the Fresnel Institute~\cite{Geffrin2009}. Our first objective in this work is to extend the MOSM to the case of bianisotropic scattering media. This is achieved through the factorization analysis of the far-field operator, which also establishes the uniqueness of determining the target from multi-static scattering data for the inverse problem. {  The challenges associated with bianisotropic media arise from the fact that the constitutive relations for Maxwell's equations are more complicated than those for standard media. As a result, the second-order PDE formulation of Maxwell's equations includes not only the usual zero- and second-order derivative terms, but also first-order terms. However, due to the smallness of the coefficients representing the bianisotropy, we can treat these first-order terms as perturbations to the standard second-order Maxwell equations.}

%uniqueness determination of the target from scattering data

Our second objective is to apply the MOSM to invert unprocessed 3D experimental data from the Fresnel Institute~\cite{Geffrin2009}.  A numerical study using processed experimental data from the Fresnel Institute in \cite{Le2022} shows that the MOSM outperforms both its original version and the factorization method. To further validate its stability and efficiency, we will test the MOSM using unprocessed experimental data. Additional results on inverting these processed 3D experimental datasets can be found in \cite{Catap2009, Chaum2009, Zaeyt2009,Li2009, Litma2009}.
% Note that \cite{Harris2018} used a direct sampling method under the small-volume hypothesis of well-separated inhomogeneities.  In contrast, the MOSM presented in this paper does not rely on this assumption. 
Furthermore, the unprocessed 3D Fresnel datasets are highly sensitive to noise and, to the best of our knowledge, not many inversion algorithms have been tested on them before.

The remainder of this paper is organized as follows. In Section \ref{se:problformula}, we present the problem formulation for the inverse scattering 
 problem and an assumption for the well-posedness of the corresponding direct problem. Section \ref{se:unique} provides the factorization analysis of the far-field operator, establishing the uniqueness of the inverse problem and the theoretical foundation for the sampling method. Section \ref{se:sampling} introduces the MOSM, including a resolution analysis of its imaging function and a stability estimate for the method under noisy data. We perform a numerical study to validate the method using synthetic and unprocessed experimental data in Section \ref{se: results}. Finally, Section \ref{se: conclude} concludes the paper.
\section{Problem formulation}
\label{se:problformula}
We consider the scattering of time-harmonic electromagnetic waves by a bianisotropic medium at a fixed frequency $\omega>0$. Suppose that the electric field $\E$ and magnetic field $\H$ satisfy Maxwell's equations
\begin{align}
\label{eq:maxwell}
    \curl  \E-i\omega \B=0, && 
    \curl  \H+i\omega \D=0,\quad \text{in } \mathbb{R}^3,
\end{align}
where $\B$ and $\D$ are the magnetic induction and electric displacement, respectively.  Assume that the medium is characterized by bounded matrix-valued functions $\eps, \mu,\xi,\zeta:\R^3\rightarrow\C^{3\times 3} $, representing the electric permittivity, magnetic permeability, and magnetoelectric coupling tensors.   We refer to \cite{Kong1990} for the following constitutive relations for bianisotropic media
\begin{align}
\label{eq:constitutive}
    \B=\mu \textbf H +\zeta \sqrt{\eps\mu} \textbf E,  &&
    \D=\eps \textbf E + \xi \sqrt{\eps\mu} \textbf H.
\end{align}
Let $\Omega\subset \mathbb{R}^3$ be a bounded Lipschitz domain occupied by the inhomogeneous medium. Assume that $\mathbb{R}^3\setminus \overline{\Omega}$ is connected and that the medium is homogeneous outside $\Omega$, meaning $\eps=\eps_0I_3,\mu=\mu_0I_3$ for some $\eps_0,\mu_0 >0$, and $\xi=\zeta=0$ in $\mathbb{R}^3\backslash \overline{\Omega}$, where $I_3$ is the $3\times 3$ identity matrix. We define  wavenumber $k=\omega \sqrt{\eps_0\mu_0}$, and the relative permittivity and permeability as $\eps_r=\eps/\eps_0$, $\mu_r=\mu/\mu_0.$ 

By rescaling the fields without changing notations, $\textbf E=\sqrt{\eps_0} \textbf E$ and $\textbf H=\sqrt{\mu_0} \textbf H$, 
we derive from the system \eqref{eq:maxwell}--\eqref{eq:constitutive} the following equation for the total electric field $\E$
\begin{align}
\label{eq:order2total}
   \curl[\mu_r^{-1} \curl \E] +ik[\xi  \mu_r^{-1}\curl  \E - \curl (\mu_r^{-1}\zeta \E)]-k^2(\eps_r-\xi \mu_r^{-1}\zeta)\E=0,\quad \text{in } \mathbb{R}^3.
\end{align}
Suppose that the transmission conditions across the boundary $\partial \Omega$ of $\Omega$ are given by
\begin{align}
\label{eq:transmission}
     \nu \times \textbf E_+= \nu \times \textbf E_-,  &&
     \nu \times \curl  \textbf E_+ = \nu \times [\mu_r^{-1}(\curl \textbf E_- - ik\xi \textbf E_-)],
\end{align}
where $\E_+$ and $\E_-$ are the traces on $\partial \Omega$ from the exterior and interior of 
$\Omega$ for $\E$, respectively, and $\nu$ is  the outward unit normal vector on $\partial \Omega$.

Assume that the inhomogeneous medium is illuminated by an incident  field $\Ei$ satisfying
\begin{align}
    \curl \curl \Ei - k^2\Ei=0,\quad \text{in } \mathbb{R}^3.
    \label{homoein}
\end{align}
Let $\u = \textbf {E}-\Ei$ be the scattered electric field. It follows from \eqref{eq:order2total} and \eqref{homoein} that $\u$ satisfies 
\begin{align}
\label{eq:order2u}
   & \curl[\mu_r^{-1} \curl  \u] +ik[\xi  \mu_r^{-1}\curl  \u - \curl (\mu_r^{-1}\zeta \u)]-k^2(\eps_r-\xi \mu_r^{-1}\zeta)\u\nonumber\\
   &= \curl[Q\curl  \Ei] -ik[\xi \mu_r^{-1}\curl \Ei -  \curl(\mu_r^{-1}\zeta \Ei)]+k^2(P-\xi \mu_r^{-1}\zeta)\Ei, \quad \text { in } \mathbb{R}^3,
\end{align}
where $$P:=\eps_r-I_3, \quad Q:=I_3-\mu_r^{-1},$$ with support in $\Omega$.
Furthermore,  the corresponding transmission conditions for $\u$ follow \eqref{eq:transmission}. The scattered field $\u$ is also required to satisfy the  Silver–Müller radiation condition 
\begin{align}
\label{eq:radiation}
 \curl \u \times \frac{\x}{|\x|} - ik \u = \mathcal{O}(|\x|^{-2}) \quad \text{as } |\x| \to \infty,  
\end{align}
which holds uniformly with respect to $\x/|\x|$. {We  refer to Figure \ref{fig:schematic} for an illustration of the geometry of the scattering problem.}

For any open and connected domain $D\subset \R^3$  with a Lipschitz boundary, let us denote
\begin{align*}
   &H(\curl,D)=\{\v\in [L^2(D)]^3: \curl \v  \in [L^2(D)]^3\},\\
   &H_{\text{loc}}(\curl,\R^3)=\{\v: \R^3\rightarrow \C^3:   {\v}|_B \in H(\curl, B) \text{ for all balls } B \subset \R^3\}.
\end{align*}
Throughout this paper, we also denote by 
$\langle \cdot , \cdot\rangle$ and 
$\|\cdot\|$ the inner product and norm on $[L^2(\Omega)]^3\times [L^2(\Omega)]^3$. In addition,  $H(\curl , D)$ is equipped by the usual inner product
$\langle \cdot , \cdot\rangle_{H(\curl , D)} = \langle \cdot , \cdot\rangle + \langle \curl \cdot , \curl \cdot\rangle.$
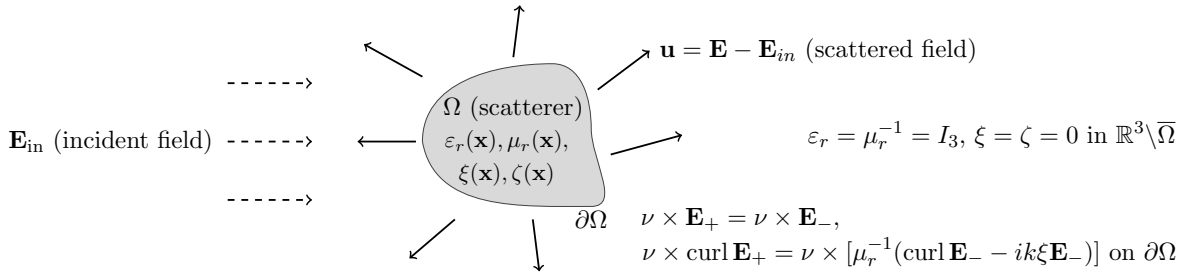
\begin{figure}[ht]
\centering
\resizebox{0.9\linewidth}{!}{%
\begin{tikzpicture}
% Draw scatterer 
\filldraw[fill=gray!30, draw=gray!50!black] 
(0,0) to[out=90,in=180] (2.,1.2) to[out=0,in=90] (2.6,0.2)
to[out=-90,in=0] (2.6,-1.) to[out=180,in=-80] (0,0);
% Label scatterer
\node[] at (1.4,0.5) {$\Omega$ (scatterer)};
\node[] at (2.6,-1.2) {$\partial \Omega$};
% boundary test
\node[] at (4.9,-1.2) { $\nu \times \textbf E_+= \nu \times \textbf E_-$,};
\node[] at (7.5,-1.7) { $ \nu \times \curl  \textbf E_+ = \nu \times [\mu_r^{-1}(\curl \textbf E_- - ik\xi \textbf E_-)]$ on  $\partial \Omega$};

% Add material properties inside scatterer
\node at (1.3,0.) {\small $\varepsilon_r(\x), \mu_r(\x)$,};
\node at (1.3,-0.5) { \small $\xi(\x), \zeta(\x)$};
% Incident field 
\draw[->, thick, dashed] (-3,0.9) -- (-1.7,0.9);
\draw[->, thick, dashed] (-3,0) -- (-1.7,0);
\draw[->, thick, dashed] (-3,-0.9) -- (-1.7,-0.9);
\node[] at (-4.8,0) { $\mathbf{E}_\text{in}$  (incident field)};
% Scattered field 
\draw[->, thick] (1.4,1.3) -- (1.5,2.1);
\draw[->, thick] (-0.1,0) -- (-1,0);
\draw[->, thick ] (2.7,0.8) -- (3.5,1.4);
\draw[->, thick] (2.9,-0.2) -- (4.,0.1);%right
\draw[->, thick] (1.7,-1.2) -- (1.8,-2);
\draw[->, thick] (0,1) -- (-0.9,1.5);
\draw[->, thick] (0.5,-1.2) -- (-0.2,-1.8);
\node[] at (6.1,1.4) { $\mathbf{u}=\E-\E_{in}  $ (scattered field)};
\node[right] at (5.8,0.1) {   $\varepsilon_r = \mu_r^{-1} = I_3$, $\xi = \zeta = 0$ in $\mathbb{R}^3\backslash \overline{\Omega}$};
\end{tikzpicture}%
}
\caption{  {A diagram illustrating the geometry of the scattering problem.}}
\label{fig:schematic}
\end{figure}

{ Given the coefficients  $\eps_r,\mu_r, \mu_r^{-1}, \xi,\zeta$, and the incident field $\Ei$ the direct problem is to determine the scattered $\u$ satisfying \eqref{eq:order2u}-\eqref{eq:radiation}. } 
Under the Assumption I below,  we refer to \cite{Nguyen2019} for the existence and uniqueness of the solution  $\u\in H_{\loc}(\curl,\mathbb{R}^3)$ to the direct problem \eqref{eq:order2u}-\eqref{eq:radiation}.
\\~\\
{\textbf{Assumption I.}} Assume that $\eps_r,\mu_r, \mu_r^{-1}, \xi,\zeta \in [L^\infty(\Omega)]^{3\times3}$ are symmetric almost everywhere. Furthermore, suppose that there exist positive constants $c_1, c_2, \alpha,\beta$ such that for any $a \in \mathbb{C}^3$, the following inequalities hold almost everywhere in $\Omega$
\begin{align}
&\Re(\mu_r^{-1}a\cdot \overline{a})\ge c_1|a|^2,\quad \quad  \quad \Re((\eps_r-\xi \mu_r^{-1}\zeta)a\cdot \overline{a})\ge c_2|a|^2,
\\
&-\Im(\mu_r^{-1}a\cdot\overline{a})\ge \alpha|a|^2,\quad \quad \Im((\eps_r-\xi \mu_r^{-1}\zeta) a\cdot \overline{a})\ge \beta|a|^2, \label{imcond}
\\
&\||\mu_r^{-1}\xi|_F\|^2_ {L^\infty}+\||\mu_r^{-1}\zeta|_F\|^2_ {L^\infty} < 2\min\{c_1c_2,\alpha\beta\},\label{imcond1}
\end{align}
% note that when the coefficients are homogenous, we ignore conditions for Im; but also we assume them to be inhomogenous so unlikely for this to happen 
%\textit{Note that if $\eps=\eps_0I_3,\mu=\mu_0I_3$ or $\xi=\zeta=0I_3$ in $\Omega$, the conditions \eqref{imcond} or  \eqref{imcond1} can be omitted.} 
where $|\cdot|_F$ is the Frobenius matrix norm. We denote 
\begin{align*}
\mathbb{S}^2 = \{\widehat \x \in \R^3: |\widehat \x| = 1 \}, \quad \quad \mathbf{L}^2_t(\mathbb{S}^2) = \{ \v \in [L^2(\mathbb{S}^2)]^3: \widehat{\x} \cdot \v(\widehat{\x}) = 0,\,\, \widehat{\x} \in \mathbb{S}^2 \},
\end{align*}
and consider the incident plane wave 
\begin{align}
    \Ei(\x,\di, \q)=\q (\di)e^{ik\x\cdot\di},\label{eq:incidentwave}
\end{align}
 where  $\di \in \S^2$ is the direction of the incident propagation 
and $\q \in \R^3$ is the polarization vector such that $\q \cdot \di =0$.  It is well-known that the scattered wave $\u(\x,\di,\q)$ to \eqref{eq:order2u}-\eqref{eq:radiation} has the asymptotic behavior
\begin{align}
\label{eq:farfield}
\u(\x,\di,\q) = \frac{e^{ik|\x|}}{ |\x|} \left ( \u^\infty(\widehat{\x},\di,\q) + \mathcal{O}\left( \frac{1}{|\x|} \right) \right) \quad \text{ as } \quad |\x| \to \infty,
\end{align}
where $\u^\infty(\widehat{\x},\di,\q)$ is called the far-field pattern of $\u$.
We conclude this section by stating the inverse problem of interest.
\\

\textbf{Inverse problem:} Given measurements of $\u^\infty(\widehat{\x},\di,\q)$ for all $\widehat{\x},\di\in \mathbb{S}^2$ at a fixed wavenumber $k>0$,  determine the domain $\Omega$ representing location and shape of the scatterer.
 %%%%%%%%%%%%%%%%%%%%%%%%%%%%%%
\section{Unique determination and factorization analysis}
\label{se:unique}
In this section, we study the factorization analysis that stems from the factorization method by Kirsch~\cite{Kirsch2008}. This analysis helps establish the uniqueness of our inverse  problem and provides a theoretical foundation for the sampling method discussed in Section \ref{se:sampling}.   

We begin by introducing  the far-field operator 
  $\mathcal{F}: \mathbf{L}^2_t(\S^2) \to  \mathbf{L}^2_t(\S^2)$  and
  the solution operator  $\mathcal{G}: [L^2(\Omega)]^3\times [L^2(\Omega)]^3 \to \mathbf L^2_t(\mathbb{S}^2) $  as follows
\begin{align}
\label{farfield}
(\mathcal{F}\g)(\widehat{\x})  = \int_{\S^2}  \u^\infty(\widehat{\x},\di, \g(\di)) \d s(\di), \quad \quad \mathcal G(\f,\g)=\u^\infty.
\end{align}
Here, $\u^\infty$ is the far-field pattern of $\u\in H_{\loc}(\curl,\mathbb{R}^3)$ to \eqref{eq:order2u}-\eqref{eq:radiation}  satisfying 
    \begin{align}
 \int_{\mathbb{R}^3} (\text{curl }  \textbf u \cdot  \text{curl } \overline  {\mathbf v}- k^2  \textbf  u\cdot \overline {\mathbf v})\d\x
 =&\int_{\Omega}{[k^2(P-{\xi}\mu_r^{-1}\zeta)(\textbf u +\f)-ik{\xi}\mu_r^{-1}(\text{curl } \textbf u+\g)}]\cdot \overline{{\mathbf v}} \nonumber
 \\&\quad +{[ik\mu_r^{-1}\zeta (\textbf u+\f) + Q(\text{curl }\textbf u+\g) ]}\cdot \text{curl } \overline{{\mathbf v}}\d\x,\label{weak}
\end{align}
with  $\f= \Ei$, $ \g = \curl \Ei$, and for all ${\mathbf v} \in H(\text{curl},\mathbb{R}^3)$ with compact support. It is known in \cite{Kirsch2007} that the variational form \eqref{weak} is equivalent
to the integro-differential equation
\begin{align}
\u (\x)& =(k^2+\nabla \text{div}) \int_\Omega[(P-{\xi}\mu_r^{-1}\zeta)(\textbf u+\f)-\frac{i}{k}{\xi}\mu_r^{-1}(\text{curl } \textbf u+\g)]\Phi(\x,\y)\d \y\nonumber
\\&\quad \quad \quad +\text{curl} \int_\Omega[ik\mu_r^{-1}\zeta (\textbf u+\f)+Q(\text{curl}\textbf u +\g)]\Phi(\x,\y)\d\y,
\end{align}
where $\Phi(\x,\y)={e^{ik|\x-\y|}}/(4\pi|\x-\y|)$ is the free-space Green function of the scalar
Helmholtz equation.
Next, we define two auxiliary operators 
 $\mathcal{H}_{1,2}: \mathbf L^2_t(\mathbb{S}^2) \to [L^2(\Omega)]^3$ as
$$(\mathcal{H}_1\g)(\x)= \int_{\S^2} \g(\di) e^{ik\x\cdot\di} \d s(\di), \quad \quad \mathcal{H}_2\g=\curl \mathcal{H}_1\g,\quad  \x \in \Omega.$$
Then, the Herglotz operator is given by $\mathcal{H}: \mathbf L_t^2(\mathbb{S}^2)\to [L^2(\Omega)]^3\times [L^2(\Omega)]^3 $ and 
$\mathcal H\g=(\mathcal H_1\g,\mathcal H_2\g).$
It is well-known that the  Herglotz operator $\mathcal{H}$ is compact and injective. In addition, 
the adjoint operators of $\mathcal H_{1,2}$ are expressed as $\mathcal H^*_{1,2}:  [L^2(\Omega)]^3\to \mathbf L_t^2(\mathbb{S}^2) $ 
$$(\mathcal H^*_1\f)(\di)=\di\times \left(\int_\Omega \f(\x)e^{-ik\di\cdot \x} 
\d\x \right)\times \di, \quad \quad(\mathcal  H^*_2\f)(\di)=ik\di \times \int_\Omega \f(\x)e^{-ik\di\cdot \x} \d\x,\quad \di\in \mathbb{S}^2.$$
\\
The adjoint operator  of  $\mathcal H$ is then represented as $\mathcal H^*: [L^2(\Omega)]^3\times [L^2(\Omega)]^3 \to \mathbf L_t^2(\mathbb{S}^2) $ 
$$\mathcal H^*(\f,\g)=\mathcal H^*_1\f+\mathcal H^*_2\g.$$
It is clear that %it is known, we can easily show
the operators defined above are all linear and bounded under Assumption I. Now, we define two operators $\mathcal T_{1,2}: [L^2(\Omega)]^3\times [L^2(\Omega)]^3\to [L^2(\Omega)]^3$ as
\begin{align}
  &  \mathcal \T_1(\f,\g)=k^2(P-\xi \mu_r^{-1}\zeta)(\u+\f)-ik\xi \mu_r^{-1}(\curl  \u + \g),\notag \\
  &  \mathcal \T_2(\f,\g)=ik\mu_r^{-1}\zeta(\u+\f)+Q(\curl  \u +\g).
    \label{defT12}
\end{align}
Then the middle operator  $\mathcal T: [L^2(\Omega)]^3\times [L^2(\Omega)]^3\to [L^2(\Omega)]^3\times [L^2(\Omega)]^3$ is given by
\begin{align}
    \mathcal T=(\mathcal \T_1,\mathcal \T_2).\label{defT}
\end{align}
%(do we need to cite or prove this result?)
%It is also known in [cite] that the solution operator can be decomposed as $\mathcal{G}=\mathcal H^*\mathcal T$, which leads to the following factorization for $\mathcal{F}$
Similarly as in~\cite{Le2022} we can show that $\mathcal G=\mathcal H^*\mathcal T$ and obtain the following factorization 
\begin{align}
    \label{fac}
\mathcal{F}=\mathcal H^*\mathcal T\mathcal H.
\end{align}
\begin{lemma}
   Suppose that Assumption I holds, then the middle operator $\T$ defined in \eqref{defT} is linear and bounded on $[L^2(\Omega)]^3\times [L^2(\Omega)]^3$. 
   \label{T:linear}
\end{lemma}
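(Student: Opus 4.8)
The plan is to view $\mathcal T$ as a finite combination of maps that are each linear and bounded, the only nontrivial ingredient being the solution operator of the direct problem. Denote by $\mathcal S\colon [L^2(\Omega)]^3\times[L^2(\Omega)]^3\to [L^2(\Omega)]^3\times[L^2(\Omega)]^3$ the operator $(\f,\g)\mapsto\big(\u|_\Omega,(\curl\u)|_\Omega\big)$, where $\u\in H_{\loc}(\curl,\R^3)$ is the radiating weak solution of \eqref{weak} associated with a general source pair $(\f,\g)$ (not merely the physical choice $\f=\Ei$, $\g=\curl\Ei$); this is well defined because $\Omega$ is bounded, so $\u|_\Omega$ and $(\curl\u)|_\Omega$ belong to $[L^2(\Omega)]^3$. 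Inspecting \eqref{defT12}, each of $\mathcal T_1$ and $\mathcal T_2$ is a finite linear combination of the maps $(\f,\g)\mapsto\u+\f$ and $(\f,\g)\mapsto\curl\u+\g$ followed by pointwise multiplication by one of the matrix fields $P-\xi\mu_r^{-1}\zeta$, $\xi\mu_r^{-1}$, $\mu_r^{-1}\zeta$, $Q$. Hence it suffices to prove that (i) $\mathcal S$ is linear and bounded, and (ii) multiplication by each of these matrix fields is bounded on $[L^2(\Omega)]^3$; granting these, $\mathcal T_1,\mathcal T_2$ are linear, map into $[L^2(\Omega)]^3$, and obey $\|\mathcal T_j(\f,\g)\|_{L^2(\Omega)}\le C\big(\|\f\|_{L^2(\Omega)}+\|\g\|_{L^2(\Omega)}+\|\u\|_{H(\curl,\Omega)}\big)$, so $\mathcal T=(\mathcal T_1,\mathcal T_2)$ is linear and bounded on $[L^2(\Omega)]^3\times[L^2(\Omega)]^3$.

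Item (ii) is immediate from Assumption I: $\eps_r,\mu_r^{-1},\xi,\zeta\in[L^\infty(\Omega)]^{3\times3}$, hence so are $P=\eps_r-I_3$, $Q=I_3-\mu_r^{-1}$ and the products listed above, and multiplication by an $L^\infty$ matrix field is bounded on $[L^2(\Omega)]^3$ with operator norm controlled by the (Frobenius) $L^\infty$ norm that appears in \eqref{imcond1}. For the linearity in (i), I would use that the right-hand side of \eqref{weak} is linear in $(\f,\g)$ together with uniqueness of the radiating weak solution: if $(\f_j,\g_j)\mapsto\u_j$, $j=1,2$, then $\alpha_1\u_1+\alpha_2\u_2$ is a radiating weak solution for the source $\alpha_1(\f_1,\g_1)+\alpha_2(\f_2,\g_2)$, hence equals the corresponding $\u$ by uniqueness; thus $\mathcal S$, and with it $\mathcal T_1,\mathcal T_2$, are linear.

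The boundedness of $\mathcal S$ is the core of the lemma, and is where Assumption I is genuinely needed. I would obtain it from the well-posedness analysis of the direct problem in \cite{Nguyen2019}: one reduces \eqref{eq:order2u}--\eqref{eq:radiation}, with a general $L^2$ source on the right-hand side of \eqref{weak}, to a variational problem $a(\u,\v)=\ell_{(\f,\g)}(\v)$ on a ball $B_R\supset\overline\Omega$ carrying the exact Dirichlet-to-Neumann condition on $\partial B_R$, where $\ell_{(\f,\g)}$ collects the terms of \eqref{weak} involving $\f$ and $\g$. The form $a$ satisfies a G{\aa}rding inequality; the sign and coercivity conditions \eqref{imcond} together with the smallness condition \eqref{imcond1} force uniqueness, and Fredholm theory then yields a unique solution with $\|\u\|_{H(\curl,B_R)}\le C\|\ell_{(\f,\g)}\|$. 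Since $\ell_{(\f,\g)}$ depends linearly and boundedly on $(\f,\g)$ through the $L^\infty$ coefficients, $\|\ell_{(\f,\g)}\|\le C(\|\f\|_{L^2(\Omega)}+\|\g\|_{L^2(\Omega)})$, and restricting to $\Omega\subset B_R$ gives the required bound for $\mathcal S$. (Alternatively, once existence and uniqueness are in hand, boundedness of $\mathcal S$ between these Hilbert spaces follows from the closed graph theorem.)

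The main obstacle is exactly this a priori estimate: one must check that the first-order bianisotropy terms $ik[\xi\mu_r^{-1}\curl\u-\curl(\mu_r^{-1}\zeta\u)]$ do not destroy the coercivity and uniqueness available for the standard second-order Maxwell problem, and this is the purpose of the smallness condition \eqref{imcond1}, which lets one treat those terms as a perturbation as noted in the introduction. Once the direct problem is known to be well posed with a solution bound that is linear in the source---which we take from \cite{Nguyen2019}---the remainder is the routine bookkeeping of composing bounded linear maps described above.
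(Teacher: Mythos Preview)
Your proposal is correct and follows essentially the same route as the paper: linearity of $\mathcal T$ comes from the linearity of the source-to-solution map (via uniqueness of the radiating weak solution), and boundedness comes from combining the $L^\infty$ bounds on the coefficient matrices with the a priori estimate $\|\u\|_{H(\curl,\Omega)}\le c\|(\f,\g)\|$ supplied by the well-posedness of the direct problem from \cite{Nguyen2019}. Your packaging via the solution operator $\mathcal S$ and composition of bounded maps is a slightly more modular presentation of exactly the same argument the paper carries out by direct computation.
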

\begin{proof} 
Let $(\f, \g),(\widetilde{\f}, \widetilde{\g})\in [L^2(\Omega)]^3 \times [L^2(\Omega)]^3$, and let $\u, \widetilde{\u} \in H(\curl,\Omega)$ be the corresponding solutions to \eqref{weak} for the data $(\f, \g)$ and $(\widetilde{\f}, \widetilde{\g})$, respectively. For any $ a,b \in \mathbb{C}^3$, by definition of $\T$ in \eqref{defT} 
$$a\T(\f,\g)+b\T(\widetilde\f,\widetilde \g)= (a\T_1(\f,\g)+b\T_1(\widetilde\f,\widetilde \g),a\T_2(\f,\g)+b\T_2(\widetilde\f,\widetilde \g)).$$
Using the linearity and well-posedness of  problem \eqref{weak}, the first component can be rewritten as
\begin{align*}
   a\T_1(\f,\g)+b\T_1(\widetilde\f,\widetilde \g)
   &= k^2(P-\xi \mu_r^{-1}\zeta)(a\u+b\widetilde \u+a\f+b\widetilde\f)-ik\xi \mu_r^{-1}(\curl(a\u+b\widetilde \u)  + a\g+b\widetilde\g)\\
   &=\T_1(a\f+b\widetilde\f,a\g+b\widetilde \g)=\T_1(a(\f,\g)+b(\widetilde\f,\widetilde \g)).
\end{align*} 
Similarly, for the second component,
$a\T_2(\f,\g)+b\T_2(\widetilde\f,\widetilde \g)=\T_2(a(\f,\g)+b(\widetilde\f,\widetilde \g)).$
Together, these results yield the linearity of $\T$.
Next, we can derive from \eqref{defT12} that 
\begin{align*}
    &\|\T_1(\f,\g)\|\le \max \{ k^2 \||P-\xi \mu_r^{-1}\zeta|_F\|_{L^\infty}, k\| |\xi \mu_r^{-1}|_F\|_{L^\infty}\}(\|\u\|_{H(\curl,\Omega)}+\|(\f,\g)\|),\\
    &\|\T_2(\f,\g)\|\le \max\{ k\||\mu_r^{-1}\zeta|_F\|_{L^\infty}, \||Q|_F\|_{L^\infty} \}(\|\u\|_{H(\curl,\Omega)}+\|(\f,\g)\|).
\end{align*}
Moreover, it follows from the well-posedness of the direct problem  $\|\u\|_{H( \curl,\Omega)}\le c\|(\f,\g)\|,$ for some constant $c>0$. %, where $|\cdot|_F$ is the Frobenius matrix norm. 
 This implies that $\T$ is bounded.
\end{proof}
The following theorem is helpful for the factorization analysis.
\begin{theorem}
\label{imT}
Under Assumption I, there exists $\gamma >0$ such that 
\begin{align}
   \langle\Im\mathcal T(\f,\g),(\f,\g)\rangle \ge \gamma \left\|(\f,\g)\right\|^2 \label{coerciveT}
\end{align}
for all $(\f,\g)\in \text{\text{Range}}(\mathcal H) \subset [L^2(\Omega)]^3\times [L^2(\Omega)]^3 $.
\end{theorem}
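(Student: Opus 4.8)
The plan is to compute $\langle \Im \mathcal{T}(\f,\g),(\f,\g)\rangle$ directly using the definition \eqref{defT12} together with the weak formulation \eqref{weak}, and then extract coercivity from the imaginary-part assumptions \eqref{imcond}. First I would observe that since $(\f,\g)\in\text{Range}(\mathcal{H})$, we have $\f=\mathcal{H}_1\g_0$ and $\g=\mathcal{H}_2\g_0=\curl\f$ for some Herglotz kernel $\g_0$, so $\f$ and $\g$ are not independent: $\g=\curl\f$ and moreover $\curl\curl\f-k^2\f=0$ in $\Omega$ (each plane wave $e^{ik\x\cdot\di}$ is a solution). This is the crucial structural fact that makes the estimate work. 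Then, writing $\v:=\u+\f$ and noting $\curl\v=\curl\u+\g$, the pair $\mathcal{T}(\f,\g)$ becomes $\big(k^2(P-\xi\mu_r^{-1}\zeta)\v - ik\xi\mu_r^{-1}\curl\v,\ ik\mu_r^{-1}\zeta\v + Q\curl\v\big)$, and $\v$ satisfies a closed equation in $\Omega$ because $\curl\curl\f-k^2\f=0$.

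The key algebraic step is to relate $\langle\mathcal{T}(\f,\g),(\f,\g)\rangle$ to a volume integral involving $\v$ and $\curl\v$ alone. I would take $\mathbf{v}=\u$ (or a compactly supported cutoff of it) as a test function in \eqref{weak}, or more directly use that $\u$ solves \eqref{eq:order2u}, integrate by parts, and use the radiation condition \eqref{eq:radiation} to produce a boundary term at infinity whose imaginary part is $-k\int_{\S^2}|\u^\infty|^2\,ds \le 0$. Combining this with $\langle\mathcal{T}(\f,\g),(\f,\g)\rangle$ — after carefully accounting for the $\mathcal{H}$-structure so that the "input" data $(\f,\g)=(\f,\curl\f)$ pairs against $\mathcal{T}$ exactly the way $\v$ pairs against the coefficient operators — should yield an identity of the schematic form
\begin{align*}
\langle\Im\mathcal{T}(\f,\g),(\f,\g)\rangle = \int_\Omega \Im\big[(\eps_r-\xi\mu_r^{-1}\zeta)\v\cdot\ol{\v}\big] - \Im\big[\mu_r^{-1}\curl\v\cdot\ol{\curl\v}\big]\,\d\x + k\int_{\S^2}|\u^\infty|^2\,\d s + (\text{cross terms}),
\end{align*}
up to the precise constants $k^2$, $k$; here the cross terms come from the $\xi,\zeta$ couplings in $\mathcal{T}$. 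Using \eqref{imcond}, the first two terms are bounded below by $k^2\beta\|\v\|^2 + \alpha\|\curl\v\|^2$ (times appropriate powers of $k$), and the cross terms are controlled in absolute value by $\||\mu_r^{-1}\xi|_F\|_{L^\infty}\|\curl\v\|\,\|\v\|$ and similarly for $\zeta$; condition \eqref{imcond1} is exactly what makes the quadratic form in $(\|\v\|,\|\curl\v\|)$ positive definite, giving $\langle\Im\mathcal{T}(\f,\g),(\f,\g)\rangle \ge c(\|\v\|^2+\|\curl\v\|^2)$.

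The final step is to pass from a lower bound in terms of $\v=\u+\f$ and $\curl\v$ back to $\|(\f,\g)\|^2 = \|\f\|^2+\|\curl\f\|^2$. This uses the well-posedness bound $\|\u\|_{H(\curl,\Omega)}\le c\|(\f,\g)\|$ from Lemma~\ref{T:linear}'s proof, but that only gives an upper bound on $\u$, not a way to bound $\f$ below by $\v$; the standard trick (as in the factorization method for the anisotropic case) is a contradiction/compactness argument: suppose no such $\gamma$ exists, take a sequence $(\f_n,\g_n)\in\text{Range}(\mathcal{H})$ with $\|(\f_n,\g_n)\|=1$ and $\langle\Im\mathcal{T}(\f_n,\g_n),(\f_n,\g_n)\rangle\to 0$, extract a weakly convergent subsequence, use compactness of $\mathcal{H}$ (so the corresponding $\v_n$ or data converge strongly in suitable norms), and derive that the limit must be zero, contradicting $\|(\f_n,\g_n)\|=1$ — here injectivity of $\mathcal{H}$ and a unique continuation / Rellich argument for the limiting scattered field (forcing $\u^\infty=0 \Rightarrow \u=0 \Rightarrow \v=\f$, then the coefficient positivity forces $\v=0$, hence $\f=0$) close the loop. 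The main obstacle I anticipate is the bookkeeping in the second step: getting the pairing $\langle\mathcal{T}(\f,\g),(\f,\g)\rangle$ to line up cleanly with the volume integral in $\v$ requires using the $\text{Range}(\mathcal{H})$ hypothesis twice — once to write $\g=\curl\f$ and once to use $\curl\curl\f=k^2\f$ in $\Omega$ — and care is needed because $\mathcal{T}$ is paired against the \emph{input} $(\f,\g)$ while the natural identity lives on $\v$; reconciling these (essentially re-deriving the Kirsch-type identity $\Im\langle\mathcal{T}\mathcal{H}\g_0,\mathcal{H}\g_0\rangle$ in the bianisotropic setting) is where the real work lies, with the coefficient-smallness condition \eqref{imcond1} being the hypothesis that precisely saves positivity.
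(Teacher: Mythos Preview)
Your overall architecture --- split $\langle\mathcal T(\f,\g),(\f,\g)\rangle$ into a volume quadratic form in $(\mathbf w_1,\mathbf w_2)=(\u+\f,\curl\u+\g)$ minus a term obtained by testing \eqref{weak} with (a cutoff of) $\u$, identify the latter's imaginary part as the far-field energy via the radiation condition, use \eqref{imcond} for coercivity and \eqref{imcond1} to absorb the $\xi,\zeta$ cross terms, then close by contradiction --- is exactly the paper's approach. Two points deserve correction.

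First, the Range$(\mathcal H)$ structure is \emph{not} ``the crucial structural fact that makes the estimate work.'' The paper's computation never uses $\g=\curl\f$ or $\curl\curl\f=k^2\f$; it works with arbitrary $(\f,\g)\in[L^2(\Omega)]^3\times[L^2(\Omega)]^3$ throughout, treating $\mathbf w_1=\u+\f$ and $\mathbf w_2=\g+\curl\u$ as independent. The pairing $\langle\mathcal T(\f,\g),(\f,\g)\rangle$ already equals $I_1-I_2$ where $I_1$ is the quadratic form in $(\mathbf w_1,\mathbf w_2)$ and $I_2$ is the right-hand side of \eqref{weak} with $\mathbf v=\u$, purely by writing $(\f,\g)=(\mathbf w_1-\u,\mathbf w_2-\curl\u)$ --- no Herglotz equation is needed. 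So your anticipated ``main obstacle'' (reconciling the input pairing with the $\v$-integral) dissolves once you stop insisting on $\g=\curl\f$.

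Second, your contradiction argument has a gap. Compactness of $\mathcal H$ does not help: you have $(\f_n,\g_n)\in\text{Range}(\mathcal H)$ with norm one, but no bound on the preimages in $\mathbf L^2_t(\S^2)$, so you cannot extract strong convergence that way. And the Rellich/unique-continuation chain $\u^\infty=0\Rightarrow\u=0\Rightarrow\v=\f\Rightarrow\f=0$ is a pointwise-positivity statement, not a uniform-coercivity one; it does not survive passage to sequences. The paper's route is much cleaner: from the lower bound one reads off $\mathbf w_{1,n}\to 0$ and (via the two completed-square terms) $\mathbf w_{2,n}\to 0$ in $[L^2(\Omega)]^3$; then the boundedness of the integro-differential solution operator (cited from Kirsch 2007) gives $\|\u_n\|_{H(\curl,\Omega)}\le C_1\|\mathbf w_{1,n}\|+C_2\|\mathbf w_{2,n}\|\to 0$, hence $\f_n=\mathbf w_{1,n}-\u_n\to 0$ and $\g_n=\mathbf w_{2,n}-\curl\u_n\to 0$, contradicting $\|(\f_n,\g_n)\|=1$. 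Replace your compactness/Rellich sketch with this bound and the proof is complete.
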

% more details for the main theorem
\begin{proof}
 Let $(\f,\g)\in \text{Range} (\mathcal H)$ and $\u$ be the corresponding solution to \eqref{weak} with this data. Denote $\textbf{w}_1=\f+\u, \textbf{w}_2=\g+\curl \u$. Then, we have 
$$
    \langle\mathcal T(\f,\g),(\f,\g)\rangle = I_1-I_2,
$$
where
\begin{align*}
    I_1&=\int_\Omega k^2(P-\xi \mu_r^{-1}\zeta)\textbf{w}_1\cdot \overline{\textbf{w}}_1-ik\xi \mu_r^{-1}{\textbf{w}}_2\cdot \overline{\textbf{w}}_1+ik\mu_r^{-1}\zeta \textbf{w}_1\cdot \overline{\textbf{w}}_2+Q\textbf{w}_2\cdot \overline{\textbf{w}}_2
    \d \x,\\
    I_2&= \int_\Omega   \left(k^2(P-\xi \mu_r^{-1}\zeta)\textbf{w}_1-ik\xi \mu_r^{-1}\textbf{w}_2\right) \cdot\overline{\u}+ \left(ik\mu_r^{-1}\zeta\textbf{w}_1+Q\textbf{w}_2\right)\cdot \curl \overline{\u}
    \d \x.
\end{align*}
The symmetry of $\mu_r^{-1}$ and $ \xi $ yields  that $\xi \mu_r^{-1}{\textbf{w}}_2\cdot \overline{\textbf{w}}_1={\mu_r^{-1}\xi \overline{\textbf{w}}_1\cdot {\textbf{w}}_2}.$
%Since $ABu\cdot v =(ABu)^\top v=u^\top B^\top A^\top v=u^\top BAv=(BAv)^\top u={BAv \cdot u}$ when $A=A^\top , B=B^\top $.
Now, taking the imaginary part of both sides of $I_1$ and using Assumption I, we obtain
% can be more specific of what conditions
\begin{align}
    \Im(I_1)\ge \int_\Omega k^2\beta|\textbf{w}_1|^2-k\Re(\mu_r^{-1}\xi\overline{\textbf{w}}_1\cdot {\textbf{w}}_2)+k\Re(\mu_r^{-1}\zeta \textbf{w}_1\cdot \overline{\textbf{w}}_2)+\alpha|{\textbf{w}}_2|^2 \d \x.\label{rightim1}
\end{align}
Next, substitute the following identities into the right-hand side in \eqref{rightim1}
\begin{align*}
    &\frac{k^2|\mu_r^{-1}\xi \overline{\textbf{w}}_1|^2}{2\alpha}-k\Re(\mu_r^{-1}\xi \overline{\textbf{w}}_1\cdot {\textbf{w}}_2) + \frac{\alpha|{\textbf{w}}_2|^2}{2}=\frac{1}{2}\left|\frac{k\mu_r^{-1}\xi \overline{\textbf{w}}_1}{\sqrt{\alpha}}-\sqrt{\alpha}\overline{\textbf{w}}_2\right|^2,\\
    &\frac{k^2|\mu_r^{-1}\zeta \textbf{w}_1|^2}{2\alpha}+k\Re(\mu_r^{-1}\zeta \textbf{w}_1\cdot \overline{\textbf{w}}_2)+\frac{\alpha|{\textbf{w}}_2|^2}{2}=\frac{1}{2}\left|\frac{k\mu_r^{-1}\zeta \textbf{w}_1}{\sqrt{\alpha}}+{\sqrt{\alpha}}{\textbf{w}_2}\right|^2,
\end{align*}
then we get
\begin{align*}
    \Im(I_1)\ge &{\frac{1}{2}\left\|\frac{k\mu_r^{-1}\xi \overline{\textbf{w}}_1}{\sqrt{\alpha}}-\sqrt{\alpha}\overline{\textbf{w}}_2\right\|^2
    +\frac{1}{2}\left\|\frac{k\mu_r^{-1}\zeta \textbf{w}_1}{\sqrt{\alpha}}+{\sqrt{\alpha}}{\textbf{w}_2}\right\|^2} 
    +k^2\int_\Omega \beta|\textbf{w}_1|^2-\frac{|\mu_r^{-1}\xi \overline{\textbf{w}}_1|^2}{2\alpha}-\frac{|\mu_r^{-1}\zeta \textbf{w}_1|^2}{2\alpha}\d \x.
\end{align*}
Moreover, notice that $$\int_\Omega \beta|\textbf{w}_1|^2-\frac{|\mu_r^{-1}\xi \overline{\textbf{w}}_1|^2}{2\alpha}-\frac{|\mu_r^{-1}\zeta \textbf{w}_1|^2}{2\alpha} \d \x \ge C\|\textbf{w}_1\|^2,$$
where $$C:=\beta-\frac{1}{2\alpha}\||\mu_r^{-1}\xi|_F\|^2_{L^\infty}-\frac{1}{2\alpha}\||\mu_r^{-1}\zeta|_F\|^2_{L^\infty},$$ which is positive by Assumption I. On the other hand, since $\u$ is the solution to \eqref{weak}, we have 
    \begin{align*}
 \int_{\mathbb{R}^3} (\text{curl }  \textbf u \cdot  \text{curl } \overline  {\mathbf v}- k^2  \textbf  u\cdot \overline {\mathbf v})\d\x
 =&\int_{\Omega}{[k^2(P-{\xi}\mu_r^{-1}\zeta)\textbf{w}_1-ik{\xi}\mu_r^{-1}\textbf{w}_2}]\cdot \overline{{\mathbf v}} +{[ik\mu_r^{-1}\zeta \textbf{w}_1 + Q\textbf{w}_2]}\cdot \text{curl } \overline{{\mathbf v}}\d\x,
\end{align*}
for all ${\mathbf v} \in H(\text{curl},\mathbb{R}^3)$ with compact support. Let $r>0$ be a constant such that $\overline{\Omega}\subset \{\x\in \R^3:|\x|<r\}$. Next, plug ${\mathbf v}=\phi \u$ into the equation right above,  where $\phi\in \C^\infty(\R^3)$ is a cut-off function with $\phi=1$ for $|\x|<r$ and $\phi=0$ for $|\x|\geq 2r$. We then obtain
    \begin{align}
&I_2=\int_{\Omega}{[k^2(P-{\xi}\mu_r^{-1}\zeta)\textbf{w}_1-ik{\xi}\mu_r^{-1}\textbf{w}_2}]\cdot \overline{{\mathbf u}} +{[ik\mu_r^{-1}\zeta \textbf{w}_1 + Q\textbf{w}_2]}\cdot \text{curl } \overline{{\mathbf u}}\d\x \notag \\
&= \int_{|\x|<r} |\text{curl }  \u|^2  - k^2   |\u|^2\d\x+\int_{r<|\x|<2r} \text{curl }  \textbf u \cdot  \text{curl } (\phi \overline  {\mathbf u})- k^2   |\u|^2\phi\d\x.\label{I2stoke}
\end{align}
Using Stokes’ theorem and taking the imaginary on both sides of \eqref{I2stoke} yields
\begin{align}
     \Im(I_2)=\Im \int_{|\x|=r}(\widehat{\x} \times \curl \u) \cdot \overline{\u} \d s(\x).\label{imi22}
\end{align}
Let $|r|\rightarrow \infty$ in \eqref{imi22}, then by the radiation condition \eqref{eq:radiation} and the asymptotic behavior of $\u^\infty$ in \eqref{eq:farfield} 
\begin{align}
    \Im(I_2)\nonumber=\frac{k}{(4\pi)^2}\int_{\mathbb{S}^2}|\u^\infty|^2\d s(\x).
\end{align}
Combining the estimations above for $\Im(I_1)$ and $\Im(I_2)$, we derive
\begin{align}
    &\Im\langle\mathcal T(\f,\g),(\f,\g)\rangle =\Im(I_1)-\Im(I_2)\nonumber\\
    &\geq {\frac{1}{2}\left\|\frac{k\mu_r^{-1}\xi \overline{\textbf{w}}_1}{\sqrt{\alpha}}-\sqrt{\alpha}\overline{\textbf{w}}_2\right\|^2
    +\frac{1}{2}\left\|\frac{k\mu_r^{-1}\zeta \textbf{w}_1}{\sqrt{\alpha}}+{\sqrt{\alpha}}{\textbf{w}_2}\right\|^2} +k^2C\|\textbf{w}_1\|^2 +\frac{k}{(4\pi)^2}\int_{\mathbb{S}^2}|\u^\infty|^2\d s(\x).\label{eq:imT}
\end{align}
Thanks to Lemma \ref{T:linear} and the fact that $$\Im \T =  \frac{\T-\T^*}{2i},$$ we have $\Im\langle\mathcal T(\f,\g),(\f,\g)\rangle=\langle\mathcal \Im \mathcal T(\f,\g),(\f,\g)\rangle$.
Now,  assume that there is no $\gamma >0$ such that \eqref{coerciveT} satisfies. We then can construct a sequence $\{(\f_n,\g_n)\}_{n=1}^\infty$ in $L^2(\Omega)^3 \times L^2(\Omega)^3$ such that $\|(\f_n,\g_n)\|=1$ for all $n\in \N$, and 
\begin{align}
    \lim_{n\rightarrow \infty}\langle \Im \T(\f_n,\g_n),(\f_n,\g_n)\rangle =0.
\label{limT}
\end{align}Let $\u_n$ be the corresponding solution to \eqref{weak} with data $(\f_n,\g_n)$ for each $n$. 
It follows from \eqref{eq:imT} and \eqref{limT} that 
\begin{align}
    {\bf{w}}_{1,n} = \f_n+\u_n \xrightarrow{n \to \infty} 0 \quad \text{ and } \quad {\bf{w}}_{2,n} = \g_n+\curl \u_n \xrightarrow{n \to \infty} 0,\label{w}
\end{align}
in $[L^2(\Omega)]^3$. 
Together with the following boundedness of the integro-differential operators in \eqref{weak} (see \cite{Kirsch2007})
$$\|\u\|_{H(\text{curl},\Omega)}\leq C_1\|\f+\u\|+C_2\| \g+\curl \u\|,$$
for some constants $C_1, C_2>0$, which implies $\u_n  \xrightarrow{n \to \infty} 0$ in $H(\text{curl},\Omega)$. Then \eqref{w} yields $(\f_n,\g_n) \xrightarrow{n \to \infty} 0$ in $L^2(\Omega)^3 \times L^2(\Omega)^3$, which leads to a contradiction whence $\|(\f_n,\g_n)\|=1$. 
%%%%%%%%%%%%%%%%%%%%%%%%%%5
\end{proof}
The next lemma provides a characterization of the scatterer  $\Omega$.
%, based on the factorization of the operator
\begin{lemma} 
Let $\phi_{\y_s}(\di)=(\di\times \p)\times \di e^{-ik\di \cdot \y_s}$, then $\y_s\in \Omega$  if and only if $\phi_{\y_s}\in \text{Range}(\mathcal{H}^*)$.\label{charac1}
\end{lemma}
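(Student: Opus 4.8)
The plan is to characterize membership in the range of $\mathcal{H}^*$ in terms of the solvability of an exterior scattering-type problem, following the classical range-characterization arguments used in the factorization method (cf. Kirsch~\cite{Kirsch2008}). The function $\phi_{\y_s}$ is, up to the constant $1/(4\pi)$, the far-field pattern of the electric dipole field $\E_{\y_s}(\x) = \curl\curl\left(\p\,\Phi(\x,\y_s)\right)$ (equivalently, $\nabla\times\nabla\times$ applied to $\p\Phi(\cdot,\y_s)$), which solves $\curl\curl\E_{\y_s} - k^2\E_{\y_s} = 0$ in $\R^3\setminus\{\y_s\}$ and satisfies the Silver--M\"uller radiation condition. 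So the statement to prove is: this dipole far-field pattern lies in $\text{Range}(\mathcal{H}^*)$ if and only if the singularity $\y_s$ lies inside $\Omega$.

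First I would compute $\text{Range}(\mathcal{H}^*)$ concretely. From the formulas for $\mathcal{H}^*_{1,2}$ given in the excerpt, $\mathcal{H}^*(\f,\g)(\di) = \di\times\left(\int_\Omega(\f(\x)+ik\g(\x))e^{-ik\di\cdot\x}\,\d\x\right)\times\di$ — wait, more carefully, $\mathcal{H}^*_1\f$ projects $\int_\Omega \f e^{-ik\di\cdot\x}\d\x$ onto the plane orthogonal to $\di$, while $\mathcal{H}^*_2\g = ik\,\di\times\int_\Omega\g e^{-ik\di\cdot\x}\d\x$, which is already tangential. So $\mathcal{H}^*(\f,\g)$ is the tangential part of a vector Fourier-type integral of a compactly supported $[L^2(\Omega)]^3$ density. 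The key observation is that such a function is precisely the (rescaled) far-field pattern of a radiating solution to the homogeneous Maxwell system $\curl\curl\E - k^2\E = 0$ in $\R^3\setminus\overline{\Omega}$ whose source density is supported in $\overline{\Omega}$: indeed, applying $(k^2+\nabla\,\div)$ and $\curl$ to volume potentials with kernel $\Phi$ over $\Omega$ produces exactly radiating fields with this far-field behavior, and the far-field map from such source densities is surjective onto $\text{Range}(\mathcal{H}^*)$ by construction. Hence $\phi_{\y_s}\in\text{Range}(\mathcal{H}^*)$ iff the electric dipole far-field pattern is attained by some radiating Maxwell field generated by a density in $\overline{\Omega}$, i.e., iff there is a radiating solution $\E$ of $\curl\curl\E - k^2\E=0$ in $\R^3\setminus\overline{\Omega}$ with far-field pattern $\phi_{\y_s}$, extendable appropriately.

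The main argument then splits into the two implications. If $\y_s\in\Omega$: the dipole field $\E_{\y_s}$ itself is a radiating solution of the homogeneous Maxwell equations in $\R^3\setminus\overline{\Omega}\subset\R^3\setminus\{\y_s\}$, it is regular there, and its far-field pattern is (a multiple of) $\phi_{\y_s}$; representing $\E_{\y_s}$ outside $\Omega$ by the Stratton--Chu formula with boundary data on $\partial\Omega$, and then converting the layer potentials on $\partial\Omega$ into volume potentials over $\Omega$ (or directly invoking the surjectivity structure of $\mathcal{H}^*$ onto far-fields of $\overline{\Omega}$-supported sources), shows $\phi_{\y_s}\in\text{Range}(\mathcal{H}^*)$. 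Conversely, if $\y_s\notin\overline{\Omega}$: suppose for contradiction $\phi_{\y_s}\in\text{Range}(\mathcal{H}^*)$. Then there is a radiating solution $\bw$ of $\curl\curl\bw - k^2\bw = 0$ in $\R^3\setminus\overline{\Omega}$ with far-field $\phi_{\y_s}$; by Rellich's lemma and unique continuation, $\bw$ must coincide with the dipole field $\E_{\y_s}$ on the unbounded component of $\R^3\setminus(\overline{\Omega}\cup\{\y_s\})$. But $\E_{\y_s}$ is singular at $\y_s\notin\overline{\Omega}$, while $\bw$ (coming from a source density supported in $\overline{\Omega}$) is smooth in a neighbourhood of $\y_s$ — contradiction. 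The boundary case $\y_s\in\partial\Omega$ is handled by the same singularity argument together with the $H(\curl)$-regularity up to $\partial\Omega$ that the potential representation of $\mathcal{H}^*$-range elements enjoys.

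The main obstacle I anticipate is making the identification ``$\text{Range}(\mathcal{H}^*) = \{\text{rescaled far-field patterns of radiating Maxwell fields with }\overline{\Omega}\text{-supported source densities}\}$'' fully rigorous, in particular handling the $\nabla\,\div$ term and the vector (rather than scalar) nature of the potentials, and verifying that an arbitrary $L^2(\Omega)^3\times L^2(\Omega)^3$ density indeed yields (after applying $(k^2+\nabla\div)$ and $\curl$ to the corresponding volume potentials) a well-defined radiating solution whose far-field is exactly $\mathcal{H}^*(\f,\g)$; this is essentially the content of the integro-differential representation quoted from \cite{Kirsch2007}, but one must check the far-field formula matches the stated $\mathcal{H}^*_{1,2}$ up to the constant $1/(4\pi)$. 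Once that dictionary is in place, the two implications follow from standard Rellich/unique-continuation reasoning, and I would point to \cite{Kirsch2008} for the analogous scalar and Maxwell prototypes rather than reproving those lemmas here.
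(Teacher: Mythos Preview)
Your reverse implication ($\y_s \notin \Omega \Rightarrow \phi_{\y_s} \notin \text{Range}(\mathcal{H}^*)$) coincides with the paper's argument: recognize $\mathcal{H}^*(\f,\g)$ as the far-field of the volume-potential radiating field $\v$, recognize $\phi_{\y_s}$ as the far-field of the electric dipole $\curl\curl[\p\,\Phi(\cdot,\y_s)]$ (up to constant), and use Rellich's lemma plus analytic continuation to force $\v$ to equal the dipole outside $\overline\Omega$, contradicting smoothness of $\v$ at $\y_s$.

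For the forward implication, however, the paper takes a much shorter route than your Stratton--Chu representation plus layer-to-volume conversion. It simply quotes from \cite{Kirsch2008} the \emph{scalar} fact that for $\y_s\in\Omega$ there exists $\widetilde\varphi\in L^2(\Omega)$ with
\[
e^{-ik\di\cdot\y_s}=\int_\Omega \widetilde\varphi(\y)\,e^{-ik\di\cdot\y}\,\d\y\quad\text{for all }\di\in\mathbb{S}^2.
\]
Multiplying by the constant vector $\p$ and projecting tangentially gives immediately
\[
\phi_{\y_s}(\di)=\di\times\Big(\int_\Omega \p\,\widetilde\varphi(\y)\,e^{-ik\di\cdot\y}\,\d\y\Big)\times\di=\mathcal{H}_1^*(\p\widetilde\varphi)=\mathcal{H}^*(\p\widetilde\varphi,\mathbf{0}),
\]
so an explicit preimage is produced in one line. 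This sidesteps entirely the obstacle you flag: no characterization of $\text{Range}(\mathcal{H}^*)$ as a class of far-fields, no Stratton--Chu formula, and no potential-theoretic conversion is needed for this direction. Your approach is not wrong, but it is considerably heavier, and the surjectivity/conversion step you would need to make rigorous is precisely what the paper's one-line trick avoids.
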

%\begin{proof}   It can be proved similarly as in \cite{Kirsch2008}.\end{proof}
\begin{proof} 
 Suppose $\y_s\in \Omega$. From \cite{Kirsch2008}, there exists  $\widetilde \varphi \in L^2(\Omega)$ such that  $$e^{-ik\di \cdot \y_s}=\int_{\Omega} \widetilde \varphi (\y)e^{-ik\di \cdot \y}\d\y,\quad \text{ for all } \di\in \mathbb{S}^2.$$ Substituting this into $\phi_{\y_s}(\di)$, we have \begin{align*} \phi_{\y_s}(\di) =\di \times \int_\Omega \p \widetilde\varphi(\y)e^{-ik\di \cdot \y}\d \y\times \di=(\mathcal H_1^* (\p \widetilde\varphi))(\di)+(\mathcal H_2^*\mathbf{0})(\di) =(\mathcal H^*\g)(\di), \end{align*} where  $\g=(\p \widetilde\varphi,\mathbf{0}) $. Hence, $\phi_{\y_s}\in \text{Range}(\mathcal H^*)$. We next prove the reverse direction by contradiction. Assume that $\y_s\notin \Omega$ and there exists $(\f,\g)\in [L^2(\Omega)]^3\times [L^2(\Omega)]^3$ such that $\phi_{\y_s}=\mathcal{H}^*(\f,\g)$. Notice that from its definition, $\mathcal{H}^*(\f,\g)$  is the far-field pattern of the radiating solution $\v$ (refer to\cite{Kirsch2008})  \begin{align*} \v =(k^2+\nabla \text{div}) \int_\Omega \f(\y)\Phi(\cdot,\y)\d \y +\text{curl} \int_\Omega\g(\y)\Phi(\cdot ,\y)\d\y. \end{align*} Moreover, $\phi_{\y_s}$ is the far-field pattern of $4\pi\text{curl curl}_{\y_s}[\p \Phi(\cdot,\y_s)]/k^2$, where again $\Phi$ is the Green function. By Rellich's Lemma and analytic continuation, we obtain $$\v=\frac{4\pi}{k^2}\text{curl curl }_{\y_s}[\p \Phi(\cdot,\y_s)]\text{ on } \mathbb{R}^3\backslash (\overline{\Omega}\cup \{\y_s\})$$ This leads to a contradiction since the right-hand side contains a singularity at $\y_s$, while $\v$ is analytic outside $\overline{\Omega}$.  \end{proof}

 The following theorem characterizes $\Omega$ based on the factorization of the far-field operator and establishes the uniqueness of the inverse scattering problem.
 \begin{theorem}
     If Assumption I is satisfied, then  domain $\Omega$ can be uniquely
determined by the far-field data $\u^\infty(\widehat{\x},\di,\q)$ in the inverse problem.
%, where $\q \cdot \di=0$, for all $\widehat{\x},\di\in \mathbb{S}^2$, generated by the incident plane wave \eqref{eq:incidentwave} at a fixed wavenumber $k>0$.\label{theo:fm}
 \end{theorem}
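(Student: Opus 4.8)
The plan is to use the Factorization Method machinery assembled in this section. The key factorization is $\mathcal{F} = \mathcal{H}^* \mathcal{T} \mathcal{H}$ from \eqref{fac}, together with Theorem \ref{imT}, which gives the coercivity of $\Im \mathcal{T}$ on $\text{Range}(\mathcal{H})$, and Lemma \ref{charac1}, which characterizes $\Omega$ via $\y_s \in \Omega \iff \phi_{\y_s} \in \text{Range}(\mathcal{H}^*)$. The strategy is to show that $\text{Range}(\mathcal{H}^*) = \text{Range}((\mathcal{F}^*\mathcal{F})^{1/4})$ (or an analogous operator built from $\mathcal F$), so that the range of $\mathcal H^*$ — and hence $\Omega$, by Lemma \ref{charac1} — is completely determined by the far-field operator $\mathcal{F}$, which is itself determined by the data $\u^\infty(\widehat\x,\di,\q)$. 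Since $\Omega$ is recovered from an object that depends only on the data, uniqueness follows: two media producing the same far-field data produce the same $\mathcal F$, hence the same $\text{Range}(\mathcal H^*)$, hence the same $\Omega$.

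First I would record the mapping properties: $\mathcal{H}$ is compact and injective (stated in the excerpt), $\mathcal{H}^*$ has dense range in an appropriate sense, and $\mathcal{T}$ is bounded by Lemma \ref{T:linear}. Next I would verify the symmetry/sign hypotheses of Kirsch's abstract range identity (Theorem 2.15 in \cite{Kirsch2008}, or the $F_\#$-type variant): one needs that $\mathcal{T}$ satisfies a coercivity condition for $\Im\mathcal{T}$ on the closure of $\text{Range}(\mathcal{H})$ — exactly what Theorem \ref{imT} provides — plus that $\mathcal{F}$ is normal or that one can symmetrize by passing to $\mathcal{F}_\# = |\Re \mathcal{F}| + |\Im \mathcal{F}|$. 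Then Kirsch's theorem yields $\text{Range}(\mathcal{H}^*) = \text{Range}(\mathcal{F}_\#^{1/2})$ (or the quarter-power of $\mathcal F^*\mathcal F$ in the normal case). Combining this with Lemma \ref{charac1}:
\begin{align*}
\y_s \in \Omega \iff \phi_{\y_s} \in \text{Range}(\mathcal{H}^*) = \text{Range}(\mathcal{F}_\#^{1/2}).
\end{align*}
Since the right-hand side depends only on $\mathcal F$, and the Picard criterion lets one test membership using only the singular system of $\mathcal F$, the set $\Omega$ is uniquely determined.

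To finish, I would phrase the uniqueness conclusion directly: suppose two bianisotropic media, both satisfying Assumption I and supported in domains $\Omega_1$, $\Omega_2$, generate the same far-field pattern $\u^\infty(\widehat\x,\di,\q)$ for all $\widehat\x,\di \in \S^2$ and all polarizations. Then they induce the same far-field operator $\mathcal{F}$ (by \eqref{farfield}), hence the same $\mathcal{F}_\#$ and the same range $\text{Range}(\mathcal{F}_\#^{1/2})$. Applying the range identity for each medium gives $\text{Range}(\mathcal{H}_1^*) = \text{Range}(\mathcal{H}_2^*)$, and then Lemma \ref{charac1} forces $\Omega_1 = \Omega_2$.

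The main obstacle I anticipate is establishing that the abstract Factorization Method hypotheses genuinely hold here — in particular, that $\mathcal{F}$ is normal (so that the clean $\text{Range}(\mathcal{H}^*) = \text{Range}((\mathcal{F}^*\mathcal{F})^{1/4})$ identity applies) or, failing that, justifying the use of the symmetrized operator $\mathcal{F}_\#$. Normality of $\mathcal F$ typically requires an extra structural property of the middle operator $\mathcal T$ (e.g., that $\Im\mathcal T$ is positive definite together with a compatibility between $\Re\mathcal T$ and the data operator), and in the bianisotropic case the cross-coupling terms $\xi,\zeta$ complicate this. The safe route — and the one I would actually carry out — is to avoid assuming normality and instead invoke the $F_\#$ version of the range identity, which needs only that $\Im\mathcal T$ is coercive on $\overline{\text{Range}(\mathcal H)}$ (Theorem \ref{imT}) plus the factorization \eqref{fac}; the verification that $\text{Range}(\mathcal H)$ is dense enough and that the coercivity extends to the closure is then the one technical point requiring care.
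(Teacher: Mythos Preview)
Your overall strategy---obtain a range identity $\text{Range}(\mathcal H^*)=\text{Range}(A^{1/2})$ for some operator $A$ built from $\mathcal F$, combine it with Lemma~\ref{charac1}, and conclude uniqueness since $A$ depends only on the data---is exactly the paper's. The difference is in the choice of $A$.

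You propose $A=\mathcal F_\#=|\Re\mathcal F|+|\Im\mathcal F|$ (or $(\mathcal F^*\mathcal F)^{1/2}$), and you claim that the $\mathcal F_\#$ range identity ``needs only that $\Im\mathcal T$ is coercive on $\overline{\text{Range}(\mathcal H)}$''. That is not accurate: the standard $\mathcal F_\#$ theorem (Theorem~2.15 in \cite{Kirsch2008}) also requires a structural hypothesis on $\Re\mathcal T$, typically that it decompose as a coercive part plus a compact perturbation. Nothing in this section establishes such a decomposition for the bianisotropic middle operator, so your ``safe route'' still has a missing hypothesis to check---precisely the obstacle you flagged at the end.

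The paper sidesteps this entirely by taking $A=\Im\mathcal F$. Since $\mathcal F=\mathcal H^*\mathcal T\mathcal H$ and $\mathcal F^*=\mathcal H^*\mathcal T^*\mathcal H$, one has the clean factorization $\Im\mathcal F=\mathcal H^*(\Im\mathcal T)\mathcal H$ with a \emph{single} coercive self-adjoint middle operator. Then Corollary~1.22 in \cite{Kirsch2008} (the elementary range identity for a positive middle operator) gives $\text{Range}(\mathcal H^*)=\text{Range}((\Im\mathcal F)^{1/2})$ directly, using only Theorem~\ref{imT} and nothing about $\Re\mathcal T$ or normality. The uniqueness argument then proceeds exactly as you wrote. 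So your proof plan becomes complete---and considerably shorter---once you replace $\mathcal F_\#$ by $\Im\mathcal F$.
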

\begin{proof}
It is easy to see that $\Im(\mathcal{F})$ is a compact self-adjoint operator.  Moreover, from~\eqref{fac}, it admits the factorization $$\Im(\mathcal{F})=\mathcal H^*\Im(\mathcal T)\mathcal H.$$
By Theorem \ref{imT}, $\Im(\mathcal T)$ is coercive on $\text{Range}(\mathcal H)$. Therefore, applying Corollary 1.22 in \cite{Kirsch2008}, it follows that 
$$
\text{Range}(\mathcal H^*) = \text{Range}(\Im(\mathcal{F})^{1/2}).$$ 
Combining  this with Lemma \ref{charac1}, we have
\begin{align}
\label{charact2}
\y_s \in \Omega \Longleftrightarrow \phi_{\y_s} \in \text{Range}(\Im(\mathcal{F})^{1/2}).
\end{align} 
Now, assume that $\Omega_1$ and $\Omega_2$ are two domains obtained from far-field data $\u_1^\infty$ and $\u_2^\infty$ in the inverse problem, respectively. Suppose that $\u_1^\infty = \u_2^\infty$. Then, the corresponding far-field operators $\mathcal{F}_1$ and $\mathcal{F}_2$, defined in~\eqref{farfield}, are the same. This implies that $$\text{Range}(\Im(\mathcal{F}_1)^{1/2})= \text{Range}(\Im(\mathcal{F}_2)^{1/2}).$$
It follows immediately that $\Omega_1=\Omega_2$, thanks to the complete characterization \eqref{charact2}.
\end{proof} 

\section{Modified orthogonality sampling method}
\label{se:sampling}
In this section, we extend the MOSM in \cite{Le2022} for solving the inverse problem in anisotropic media to bianisotropic media. Let $\y_s\in \R^3$ denote a sampling point and $\p\in \R^3$ be a fixed nonzero vector. We consider the following imaging function 
\begin{align}
\label{imagfuncmosm}
    \mathcal{I}_{MOSM}(\y_s)=\int_{\mathbb{S}^2}\left| \int_{\mathbb{S}^2}\u^{\infty}(\widehat \x,\di, \h(\di) )e^{-ik\di \cdot \y_s}\d s(\di)\right|^2\d s(\widehat \x),
\end{align}
where \begin{align}
    \h(\di)=\alpha_1\di\times \p+\alpha_2(\di\times \p)\times \di,\label{eq:h}
\end{align}
with any  $\alpha_1,\alpha_2\in \C$ such that $|\alpha_1|^2+|\alpha_2|^2>0$.

To analyze the behavior of the imaging function, we connect $\mathcal{I}_{MOSM}$ with the far-field operator $\mathcal{F}$. 
It is well-known that the far-field pattern $\u^\infty(\widehat\x, \di, \q)$ is linear in polarization $\q\in \mathbb{R}^3$ where $\q\cdot \di=0$  and can be written as
$$\u^\infty(\widehat \x,\di,\q) = \u^\infty(\widehat \x,\di)\q,\quad \text{ for all } \widehat \x,\di\in\mathbb{S}^2.$$ Now, let $\psi_{\y_s}(\di) =  \h(\di)e^{-ik\di\cdot \y_s}$. The imaging function in \eqref{imagfuncmosm} satisfies
\begin{align}
     \mathcal{I}_{MOSM}(\y_s)=\int_{\mathbb{S}^2}\left| \int_{\mathbb{S}^2}\u^{\infty}(\widehat \x,\di, \psi_{\y_s}(\di) )\d s(\di)
\right|^2\d s(\widehat \x)=\|\mathcal{F}\psi_{\y_s}\|^2.
\end{align}

The next theorem provides a resolution analysis for the imaging function $\mathcal{I}_{MOSM}$.
\begin{theorem} For any $\y_s\in\R^3$, the imaging function satisfies 
    $$\begin{cases}
&\mathcal{I}_{MOSM}(\y_s)\leq ||\mathcal{G}||^2[(|\alpha_1|^2+k^2|\alpha_2|^2)||W_{\y_s}||^2+(k^2|\alpha_1|^2+|\alpha_2|^2)||V_{\y_s}||^2], \thinspace  \\
&\mathcal{I}_{MOSM}(\y_s)\geq\frac{ \gamma^2}{||\h||^2}[(|\alpha_1|^2+k^2|\alpha_2|^2)||W_{\y_s}||^2+(k^2|\alpha_1|^2+|\alpha_2|^2)||V_{\y_s}||^2]^2>0,  
\end{cases}$$   
where $\h$ is given in \eqref{eq:h} and $\gamma$ is the positive coercive constant in Lemma \ref{imT},
$$W_{\y_s}(\x) := \widetilde W (\y_s - \x),\quad  V_{\y_s}(\x) := \widetilde V (\y_s - \x),\quad  \text{ for }\x \in\Omega,$$
with
\begin{align*}
&\widetilde W (\z) = 4\pi i(\z \times  \p) \frac{\cos(k|\z|) - j_0(k|\z|)}{k|\z|^2 },\quad \text{ where } \quad j_0(x)=\frac{\sin(x)}{x},
\\
 &\widetilde V (\z)=4\pi\frac{ \p|\z|^2 - (\p \cdot \z)\z}{
|\z|^2} j_0(k|\z|) - 12\pi \frac{(\p \cdot \z)\z - \frac{1}
{3}\p|\z|^
2}{
k^2|\z|^
4} (\cos(k|\z|) - j_0(k|\z|)).
\end{align*}
\end{theorem}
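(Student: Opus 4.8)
The plan is to express the imaging function $\mathcal{I}_{MOSM}(\y_s) = \|\mathcal{F}\psi_{\y_s}\|^2$ in terms of the factorization $\mathcal{F} = \mathcal{H}^*\mathcal{T}\mathcal{H}$, so that $\mathcal{I}_{MOSM}(\y_s) = \|\mathcal{H}^*\mathcal{T}\mathcal{H}\psi_{\y_s}\|^2$. Since $\mathcal{G} = \mathcal{H}^*\mathcal{T}$, this is $\|\mathcal{G}(\mathcal{H}\psi_{\y_s})\|^2 = \|\mathcal{G}(\mathcal{H}_1\psi_{\y_s}, \mathcal{H}_2\psi_{\y_s})\|^2$. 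The upper bound then follows at once from boundedness of $\mathcal{G}$, namely $\mathcal{I}_{MOSM}(\y_s) \le \|\mathcal{G}\|^2 \|\mathcal{H}\psi_{\y_s}\|^2 = \|\mathcal{G}\|^2\big(\|\mathcal{H}_1\psi_{\y_s}\|^2 + \|\mathcal{H}_2\psi_{\y_s}\|^2\big)$, provided I can identify $\|\mathcal{H}_1\psi_{\y_s}\|^2$ and $\|\mathcal{H}_2\psi_{\y_s}\|^2$ with the claimed combinations of $\|W_{\y_s}\|^2$ and $\|V_{\y_s}\|^2$. For the lower bound, I would first argue $\psi_{\y_s} \in \mathrm{Range}(\mathcal{H}^*)$-type reasoning is not needed; instead use that $\mathcal{H}\psi_{\y_s} \in \mathrm{Range}(\mathcal{H})$ so Theorem \ref{imT} applies: $\|\mathcal{G}(\mathcal{H}\psi_{\y_s})\| = \|\mathcal{H}^*\mathcal{T}(\mathcal{H}\psi_{\y_s})\| \ge \|\mathcal{H}\psi_{\y_s}\|^{-1}\,|\langle \mathcal{H}^*\mathcal{T}\mathcal{H}\psi_{\y_s}, \psi_{\y_s}\rangle| = \|\mathcal{H}\psi_{\y_s}\|^{-1}\,|\langle \mathcal{T}(\mathcal{H}\psi_{\y_s}), \mathcal{H}\psi_{\y_s}\rangle| \ge \|\mathcal{H}\psi_{\y_s}\|^{-1}\,\Im\langle\mathcal{T}(\mathcal{H}\psi_{\y_s}),\mathcal{H}\psi_{\y_s}\rangle \ge \gamma\|\mathcal{H}\psi_{\y_s}\|$. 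Squaring gives $\mathcal{I}_{MOSM}(\y_s) \ge \gamma^2\|\mathcal{H}\psi_{\y_s}\|^2$; but to get the stated quartic bound with $\|\h\|^2$ in the denominator I should instead bound $\|\mathcal{H}\psi_{\y_s}\|$ from below using $\|\mathcal{H}^*\| = \|\mathcal{H}\| \le \|\h\|$-type estimate — more precisely, write $\|\mathcal{H}\psi_{\y_s}\|^2 = \langle \mathcal{H}^*\mathcal{H}\psi_{\y_s}, \psi_{\y_s}\rangle \le \|\mathcal{H}^*\mathcal{H}\psi_{\y_s}\|\,\|\psi_{\y_s}\|$, and since $\|\psi_{\y_s}\| = \|\h\|$ (the exponential has modulus one), I combine these to get the factor structure claimed.

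Next I would compute $\mathcal{H}_1\psi_{\y_s}$ and $\mathcal{H}_2\psi_{\y_s}$ explicitly. By definition, $(\mathcal{H}_1\psi_{\y_s})(\x) = \int_{\S^2} \h(\di) e^{ik\x\cdot\di} e^{-ik\di\cdot\y_s}\,ds(\di) = \int_{\S^2}\big(\alpha_1\di\times\p + \alpha_2(\di\times\p)\times\di\big)e^{ik(\x-\y_s)\cdot\di}\,ds(\di)$. Setting $\z = \x - \y_s$, the two vector integrals $\int_{\S^2}(\di\times\p)e^{ik\z\cdot\di}\,ds(\di)$ and $\int_{\S^2}(\di\times\p)\times\di\, e^{ik\z\cdot\di}\,ds(\di)$ are classical: they are evaluated by differentiating the Funk–Hecke / plane-wave identity $\int_{\S^2}e^{ik\z\cdot\di}\,ds(\di) = 4\pi j_0(k|\z|)$ with respect to $\z$ and using $\int_{\S^2}\di\, e^{ik\z\cdot\di}\,ds(\di) = 4\pi i\,\nabla_\z\, j_0(k|\z|)/k$ and the second-moment formula for $\int_{\S^2}\di\otimes\di\,e^{ik\z\cdot\di}$. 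Carrying this out, the first integral gives (up to constants) the curl-type term producing $\widetilde W(-\z)$ and the second gives the $\mathrm{curl\,curl}$-type term producing $\widetilde V(-\z)$; with the sign conventions $W_{\y_s}(\x) = \widetilde W(\y_s - \x)$, $V_{\y_s}(\x) = \widetilde V(\y_s-\x)$, one finds $\mathcal{H}_1\psi_{\y_s} = \alpha_1 W_{\y_s} + \alpha_2 V_{\y_s}$ (with suitable normalization absorbed into $\widetilde W, \widetilde V$). Then $\mathcal{H}_2\psi_{\y_s} = \curl(\mathcal{H}_1\psi_{\y_s})$, and the key algebraic fact is that $\curl \widetilde W$ and $\curl \widetilde V$ swap roles up to factors of $\pm k$: concretely $\curl_\x W_{\y_s} = -k V_{\y_s}$ (or $\pm ik$) and $\curl_\x V_{\y_s} = \mp ik W_{\y_s}$, which holds because $\widetilde W$ comes from $\curl_{\y_s}[\p\, \text{(scalar)}]$ and $\widetilde V$ from $\curl\curl$ of the same, and $\curl\curl\curl = -k^2\curl$ on these radiating-type fields modulo the Helmholtz relation $\Delta j_0(k|\cdot|) = -k^2 j_0(k|\cdot|)$. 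Hence $\mathcal{H}_2\psi_{\y_s} = \alpha_1\curl W_{\y_s} + \alpha_2\curl V_{\y_s}$ is a combination $\propto \alpha_1 V_{\y_s} + \alpha_2 W_{\y_s}$ times $k$.

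Putting these together, and using orthogonality of $W_{\y_s}$ and $V_{\y_s}$ in $[L^2(\Omega)]^3$ — or at least the fact that the cross terms vanish after integrating over $\S^2$ in the original double integral, which is really what makes the clean formula work — I get $\|\mathcal{H}_1\psi_{\y_s}\|^2 = |\alpha_1|^2\|W_{\y_s}\|^2 + |\alpha_2|^2\|V_{\y_s}\|^2$ and $\|\mathcal{H}_2\psi_{\y_s}\|^2 = k^2\big(|\alpha_1|^2\|V_{\y_s}\|^2 + |\alpha_2|^2\|W_{\y_s}\|^2\big)$, and summing yields $\|\mathcal{H}\psi_{\y_s}\|^2 = (|\alpha_1|^2 + k^2|\alpha_2|^2)\|W_{\y_s}\|^2 + (k^2|\alpha_1|^2 + |\alpha_2|^2)\|V_{\y_s}\|^2$, which is exactly the bracketed expression appearing in both bounds. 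Strict positivity follows since $W_{\y_s}, V_{\y_s}$ are real-analytic and not identically zero on the open set $\Omega$ (they are built from $j_0$ and cosine, which do not vanish on any open set), so the bracket is strictly positive for every $\y_s$. The main obstacle I anticipate is the explicit evaluation of the vector-valued plane-wave integrals and, especially, verifying the curl identities relating $W_{\y_s}$ and $V_{\y_s}$ with the correct powers of $k$ and signs; these are elementary but error-prone, and one must be careful that $\curl\curl$ acting on the $\p\,\Phi$-type kernel reproduces the $V$ structure with the stated coefficients. I would organize that computation by working with the scalar generating function $g(\z) = j_0(k|\z|)$, setting $W$-type field $= \curl_\z(\p g)$ up to a constant and $V$-type field $= k^{-2}\curl_\z\curl_\z(\p g)$, and then invoking $\curl\curl(\p g) = \nabla(\p\cdot\nabla g) - \p\Delta g = \nabla(\p\cdot\nabla g) + k^2\p g$ to get all relations in one stroke, matching against the displayed formulas for $\widetilde W, \widetilde V$ at the end.
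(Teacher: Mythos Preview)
Your overall strategy is correct and matches the argument the paper defers to (Theorem~14 of \cite{Le2022}): write $\mathcal{I}_{MOSM}(\y_s)=\|\mathcal{F}\psi_{\y_s}\|^2=\|\mathcal{G}(\mathcal{H}\psi_{\y_s})\|^2$, bound above by $\|\mathcal{G}\|^2\|\mathcal{H}\psi_{\y_s}\|^2$, bound below via Theorem~\ref{imT}, and identify $\|\mathcal{H}\psi_{\y_s}\|^2$ using Funk--Hecke. Your curl identities $\curl W_{\y_s}=-ikV_{\y_s}$, $\curl V_{\y_s}=ikW_{\y_s}$ (obtained by differentiating under the spherical integral) and the pointwise orthogonality $W_{\y_s}(\x)\cdot\overline{V_{\y_s}(\x)}=0$ (since $\z\times\p$ is orthogonal to $\mathrm{span}\{\z,\p\}$) are exactly right and give the bracketed expression; no hedging is needed there.

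The one genuine slip is in the lower bound. In your Cauchy--Schwarz step you wrote $\|\mathcal{F}\psi_{\y_s}\|\ge\|\mathcal{H}\psi_{\y_s}\|^{-1}|\langle\mathcal{F}\psi_{\y_s},\psi_{\y_s}\rangle|$, but the inner product here is on $\mathbf{L}^2_t(\S^2)$, so the correct bound is $\|\mathcal{F}\psi_{\y_s}\|\ge\|\psi_{\y_s}\|^{-1}|\langle\mathcal{F}\psi_{\y_s},\psi_{\y_s}\rangle|$. With $\|\psi_{\y_s}\|_{\mathbf{L}^2_t(\S^2)}=\|\h\|$ and $\langle\mathcal{F}\psi_{\y_s},\psi_{\y_s}\rangle=\langle\mathcal{T}\mathcal{H}\psi_{\y_s},\mathcal{H}\psi_{\y_s}\rangle$, Theorem~\ref{imT} gives directly
\[
\|\mathcal{F}\psi_{\y_s}\|\ \ge\ \frac{|\langle\mathcal{T}\mathcal{H}\psi_{\y_s},\mathcal{H}\psi_{\y_s}\rangle|}{\|\h\|}\ \ge\ \frac{\gamma\,\|\mathcal{H}\psi_{\y_s}\|^2}{\|\h\|},
\]
and squaring yields the stated quartic lower bound in one line. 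Your attempted workaround via $\|\mathcal{H}\psi_{\y_s}\|^2\le\|\mathcal{H}^*\mathcal{H}\psi_{\y_s}\|\|\h\|$ is unnecessary and does not close the argument; once you correct the Cauchy--Schwarz vector from $\mathcal{H}\psi_{\y_s}$ to $\psi_{\y_s}$, everything falls out.
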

\begin{proof}
Using  Theorem~\ref{imT} the proof can be done similarly as in Theorem 14 of \cite{Le2022}, thus we omit it here.
\end{proof}
%This result is similar to [cite], thus
It is known that the functions $\|W_{\y_s}\|^2$ and $\|V_{\y_s}\|^2$ peak when $|\y_s-\x| < r$ for a small $r>0$, and decay rapidly as $\y_s$ is away from $\x\in \Omega$   {(see \cite{Le2022})}.
The imaging function then has the following behavior.
\begin{corollary}
    For every $\y_s\in \Omega$, the imaging function $\mathcal{I}_{OSM}(\y_s)$ is bounded from below by a positive constant. Moreover, for every $\y_s\notin \Omega$, the imaging function satisfies
$$\mathcal{I}_{OSM}(\y_s)=\mathcal{O}\left(\frac{1}{\text{dist}(\y_s, \Omega)^2}\right),\text{ as } \text{dist}(\y_s,\Omega)\to\infty.$$
\end{corollary}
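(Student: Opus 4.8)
The plan is to obtain both assertions directly from the two-sided estimate of the preceding resolution theorem, so that the only real work is to quantify the two scalar quantities $\|W_{\y_s}\|^2$ and $\|V_{\y_s}\|^2$. Write $M(\y_s):=(|\alpha_1|^2+k^2|\alpha_2|^2)\|W_{\y_s}\|^2+(k^2|\alpha_1|^2+|\alpha_2|^2)\|V_{\y_s}\|^2$, so that the theorem reads $\gamma^2\|\h\|^{-2}M(\y_s)^2\le\mathcal{I}_{MOSM}(\y_s)\le\|\mathcal{G}\|^2M(\y_s)$ and, in its last inequality, already records that $M(\y_s)>0$ for every $\y_s$. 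Since $k>0$ and $|\alpha_1|^2+|\alpha_2|^2>0$, the two scalar weights in $M$ are both strictly positive, so it suffices to study $\|W_{\y_s}\|^2=\int_\Omega|\widetilde W(\y_s-\x)|^2\,\d\x$ and $\|V_{\y_s}\|^2=\int_\Omega|\widetilde V(\y_s-\x)|^2\,\d\x$ as functions of $\y_s$.

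For the first claim I would begin from the expansion $\cos x-j_0(x)=-x^2/3+\mathcal{O}(x^4)$ near $x=0$, which shows that $\widetilde W(\z)=\mathcal{O}(|\z|)$ and that $\widetilde V(\z)$ stays bounded as $\z\to0$; hence $|\widetilde W|^2$ and $|\widetilde V|^2$ are bounded on every bounded set, $M$ is finite, and by dominated convergence $\y_s\mapsto M(\y_s)$ is continuous on $\R^3$ — in particular on the compact set $\overline\Omega$. Combined with the pointwise positivity $M(\y_s)>0$ (which one also sees directly, since $\widetilde W(\y_s-\cdot)$ vanishes only where $\y_s-\x$ is parallel to $\p$ or where $|\y_s-\x|$ equals one of the isolated zeros of $r\mapsto\cos(kr)-j_0(kr)$ — a set of Lebesgue measure zero — so that the positive-measure set $\Omega$ carries a positive integral), continuity on the compact set $\overline\Omega$ yields $m:=\min_{\overline\Omega}M>0$. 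Consequently $\mathcal{I}_{MOSM}(\y_s)\ge\gamma^2m^2/\|\h\|^2>0$ uniformly for $\y_s\in\Omega$, which is the first assertion.

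For the decay statement I would use the large-argument behaviour of the special functions: $j_0(k|\z|)=\mathcal{O}(|\z|^{-1})$ as $|\z|\to\infty$, while $\cos(k|\z|)$, $j_0(k|\z|)$ and the normalized tensorial prefactors in $\widetilde V$ remain bounded and $|\z\times\p|\le|\z||\p|$. Substituting these into the explicit formulas, and using the $|\z|^2$ and $|\z|^4$ in the denominators of $\widetilde W$ and $\widetilde V$, one reads off $\widetilde W(\z)=\mathcal{O}(|\z|^{-1})$ and $\widetilde V(\z)=\mathcal{O}(|\z|^{-1})$. When $\y_s\notin\overline\Omega$ we have $|\y_s-\x|\ge\dist(\y_s,\Omega)$ for all $\x\in\Omega$, so $\|W_{\y_s}\|^2+\|V_{\y_s}\|^2\le C|\Omega|\,\dist(\y_s,\Omega)^{-2}$ and thus $M(\y_s)=\mathcal{O}(\dist(\y_s,\Omega)^{-2})$; the upper bound $\mathcal{I}_{MOSM}(\y_s)\le\|\mathcal{G}\|^2M(\y_s)$ then gives exactly the stated $\mathcal{O}(\dist(\y_s,\Omega)^{-2})$ decay.

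I expect the main obstacle to be the uniformity needed for the first claim: the resolution theorem only delivers $\mathcal{I}_{MOSM}(\y_s)>0$ for each individual $\y_s$, and upgrading this to one positive constant valid throughout $\Omega$ rests on the continuity of $M$ on the compact set $\overline\Omega$, which in turn requires checking that $\widetilde W$ and $\widetilde V$ stay bounded near the origin (so that $|\widetilde W|^2,|\widetilde V|^2$ are locally integrable and dominated convergence applies) and that $\widetilde W(\y_s-\cdot)$ does not vanish almost everywhere on $\Omega$. By contrast, the large-distance asymptotics required for the decay estimate are routine.
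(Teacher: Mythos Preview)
Your proposal is correct and follows exactly the route the paper itself takes: the corollary is not given an independent proof in the paper, but is stated immediately after the remark that $\|W_{\y_s}\|^2$ and $\|V_{\y_s}\|^2$ peak for $\y_s$ near $\Omega$ and decay rapidly otherwise (with a citation to \cite{Le2022}), so both assertions are meant to be read off from the two-sided bound of the preceding theorem together with the small- and large-argument behaviour of $\widetilde W$ and $\widetilde V$. You have simply filled in those details --- the Taylor expansion of $\cos(kr)-j_0(kr)$ near $0$ to obtain local boundedness and continuity of $M$, the compactness argument on $\overline\Omega$ for the uniform lower bound, and the $\mathcal{O}(|\z|^{-1})$ asymptotics for the decay --- which the paper leaves to the reference.
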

To conclude this section, we discuss the robustness of the method against noisy data. Suppose that we only have access to the perturbed far-field operator  $\mathcal{F}_\delta$ satisfying 
%To conclude this section, we discuss the robustness of the method against noisy data. Suppose that we only have access to  the perturbed far-field data $\mathcal{F}_\delta$, which deviates from the exact data $\mathcal{F}$ by a perturbation satisfying
  $$\|\mathcal F-\mathcal F_\delta\|\leq   {\delta}\|\mathcal F\|,$$  where $\delta>0$ represents the noise level. Let  $\mathcal{I}_{MOSM,\delta}$ denote the imaging function corresponding to the noisy data $F_\delta$. We obtain the following stability estimate as in Theorem 7 of \cite{Le2022}.
    $$|\mathcal I_{MOSM}(\y_s)-\mathcal{I}_{MOSM,\delta}(\y_s)|\leq(\delta^2+2\delta)\|\mathcal{F}\|^2\|\h\|^2,\quad \text{ for all } \y_s\in \R^3. $$
\section{Numerical study}
\label{se: results}
%code runs much faster than osm and fm
\subsection{Synthetic  data}
In this section, we present a numerical example to validate the performance of the MOSM with synthetic data. All computations were performed using MATLAB. Synthetic data were generated by solving the direct problem using the spectral solver in \cite{Nguyen2019}, with the following incident wave at wavenumber $k=12$ 
$$\E_{\text{in}}(\x, \di,\q) =\q e^{ik\di \cdot  \x},\quad \q=(\di \times \p)\times \di,\quad \p=(1/\sqrt{3},-1/\sqrt{3},1/\sqrt{3}), \quad  \di \in \mathbb{S}^2.$$
The true scatterer $\Omega$, shown in Figures \ref{fig:simulated}(a) and \ref{fig:simulated}(d), is associated with the following coefficients that satisfy  Assumption I
\begin{align*}
    \xi(\x)=\begin{cases}
A_\xi, & \x\in \Omega\\
0,& \text{otherwise}
    \end{cases},
 \hspace{0.4cm}
\eps_r(\x)=\begin{cases}
A_{\eps_r}, & \x\in \Omega\\
I_3,& \text{otherwise}
    \end{cases},
 \hspace{0.4cm}
{\mu_r^{-1}}(\x)=\begin{cases}
A_{\mu_r^{-1}}, & \x\in \Omega\\
I_3,& \text{otherwise},
    \end{cases},
 \hspace{0.4cm}   \zeta=-\xi,
\end{align*}
with the diagonal matrices  $A_\xi=\text{diag}(0.03, 0.02,  0.01)$, $A_{\eps_r}=\text{diag}(0.8+0.5i,  0.7+i,  0.6+0.4i)$, and $A_{\mu_r^{-1}}=\text{diag}(0.2-0.3i,  0.6-0.4i,  0.9-0.7i)$.
%$A_\xi=\begin{bmatrix}.03&&\\
%&.02&\\
%&&.01
%\end{bmatrix}$, \hspace{0.2cm}
%$A_{\eps_r}=\begin{bmatrix}.8+.5i&&\\
%&.7+i&\\
%&&.6+.4i
%\end{bmatrix}$,\hspace{0.2cm}
%$A_{\mu_r^{-1}}=\begin{bmatrix}.2-.3i&&\\
%&.6-.4i&\\
%&&.9-.7i
%\end{bmatrix}.$

The far-field data $\u^\infty(\widehat{\x},\di,\q)$  were collected at $30$ uniformly distributed points each for $\widehat{\x}$ and $\di$ on $\mathbb{S}^2$. To simulate noise, we incorporate into the data vector a complex-valued vector $\mathcal{N}\in \C^3$ containing numbers $a + ib$,
where $a, b \in (-1, 1)$ randomly generated from a uniform distribution.  The noisy data $\u^\infty_\delta$ is given by
$$\u^\infty_\delta=\u^\infty+\delta\frac{\mathcal{N}}{\|\mathcal{N}\|} \|\u^\infty\|,$$
 where $\delta>0$ is the level of noise. The sampling domain is the cubic $[-1.5,1.5]^3$, with 40 sampling points uniformly distributed on each side. An isovalue of $0.5$ ($50\%$ of the maximal value of the normalized imaging function) was used for 3D isosurface plotting. Color bars are excluded in the visualization, as the focus is on accurately identifying the support of coefficients.

Figure \ref{fig:simulated} shows the reconstruction by the MOSM with $k=12$ under $30\%$ random noise in \ref{fig:simulated}(b), \ref{fig:simulated}(e), and $50\%$ random noise in \ref{fig:simulated}(c), \ref{fig:simulated}(f). The method provides reasonable results for reconstructing the scatterer's shape and location, with slight differences between the results at different noise levels. This demonstrates that the MOSM is stable against noisy data, while also being fast and easy to implement.

\begin{figure}[h]
    \centering
     \subfloat[]{
    \begin{tikzpicture}
    \node[anchor=south west,inner sep=0] (image) at (0,0) {\includegraphics[width=3.5cm]{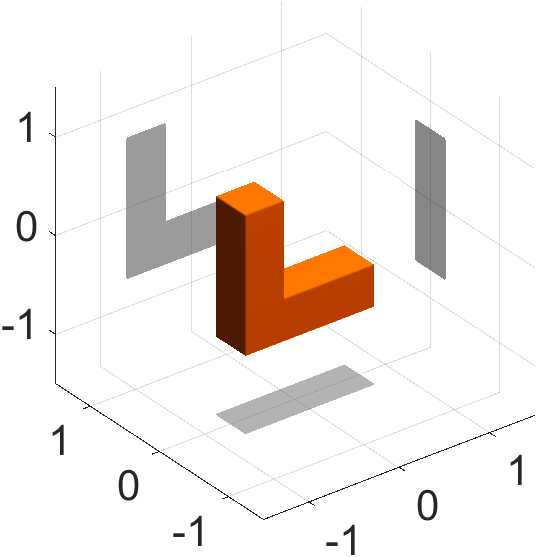}};
    \begin{scope}[x={(image.south east)},y={(image.north west)}]
      \node[anchor=north] at (.9,0.11) {{\footnotesize{$x$}}};
      \node[anchor=south] at (0.12,.04) {\footnotesize{$y$}};
      \node[anchor=south] at (0.05,0.83) {\footnotesize{$z$}};      
    \end{scope}
      \end{tikzpicture}}
           \hspace{1cm}
     \subfloat[]{
    \begin{tikzpicture}
    \node[anchor=south west,inner sep=0] (image) at (0,0) {\includegraphics[width=3.5cm]{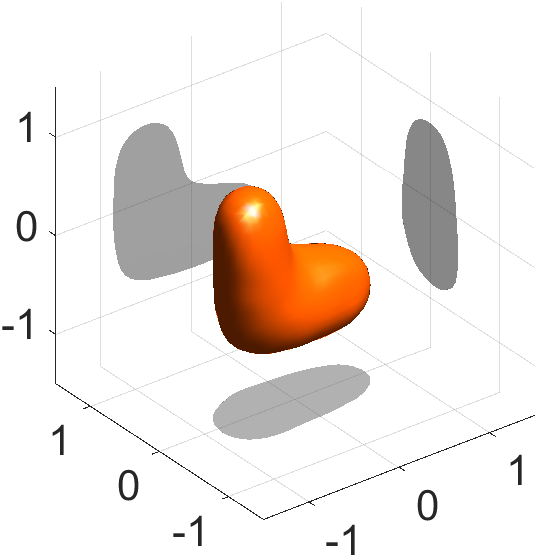}};
    \begin{scope}[x={(image.south east)},y={(image.north west)}]
      \node[anchor=north] at (.9,0.11) {\phantom{\footnotesize{$x$}}};
      \node[anchor=south] at (0.12,.06) {\phantom{\footnotesize{$y$}}};
      \node[anchor=south] at (0.05,0.83) {\phantom{\footnotesize{$x$}}};      
    \end{scope}
  \end{tikzpicture}} 
      \hspace{1cm}
     \subfloat[]{
    \begin{tikzpicture}
    \node[anchor=south west,inner sep=0] (image) at (0,0) {\includegraphics[width=3.5cm]{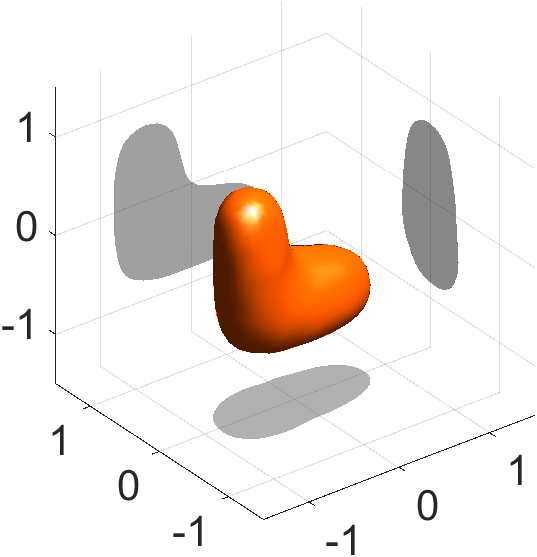}};
    \begin{scope}[x={(image.south east)},y={(image.north west)}]
      \node[anchor=north] at (.9,0.11) {\phantom{\footnotesize{$x$}}};
      \node[anchor=south] at (0.12,.06) {\phantom{\footnotesize{$y$}}};
      \node[anchor=south] at (0.05,0.83) {\phantom{\footnotesize{$x$}}};      
    \end{scope}
  \end{tikzpicture}}  
   \\
  \vspace{0.1cm}
  \subfloat[ ]{
  \begin{tikzpicture}
    \node[anchor=south west,inner sep=0] (image) at (0,0) {\includegraphics[width=3.5cm]{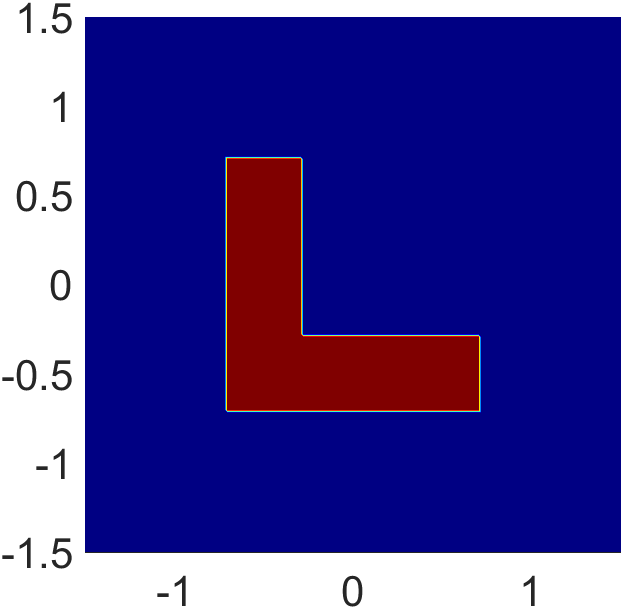}};
    \begin{scope}[x={(image.south east)},y={(image.north west)}]
      \node[anchor=north] at (.53,0) {\footnotesize{$x$}};
      \hspace{0.2cm}
      \node[anchor=south] at (-0.1,.52) {\footnotesize{$z$}};
    \end{scope}
  \end{tikzpicture}}
      \hspace{1cm}
  \subfloat[]{
  \begin{tikzpicture}
    \node[anchor=south west,inner sep=0] (image) at (0,0) {\includegraphics[width=3.5cm]{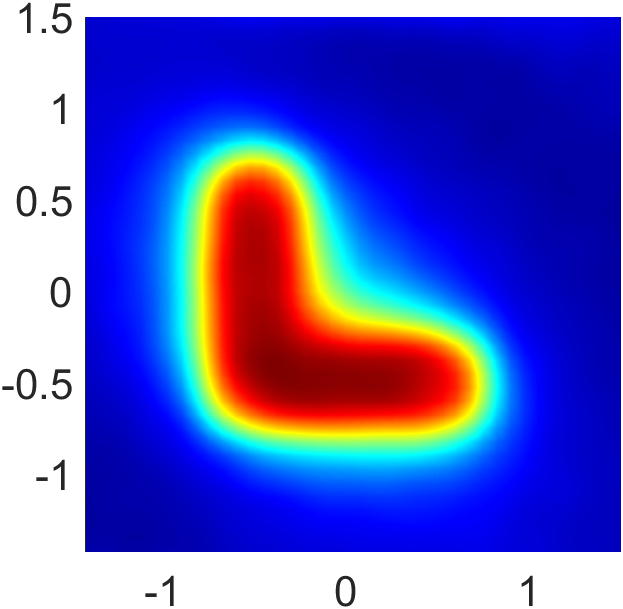}};
    \begin{scope}[x={(image.south east)},y={(image.north west)}]
      \node[anchor=north] at (.5,0) {\phantom{\footnotesize{$x$}}};
    \end{scope}
  \end{tikzpicture}}
  \hspace{1cm}
  \subfloat[]{
  \begin{tikzpicture}
    \node[anchor=south west,inner sep=0] (image) at (0,0) {\includegraphics[width=3.5cm]{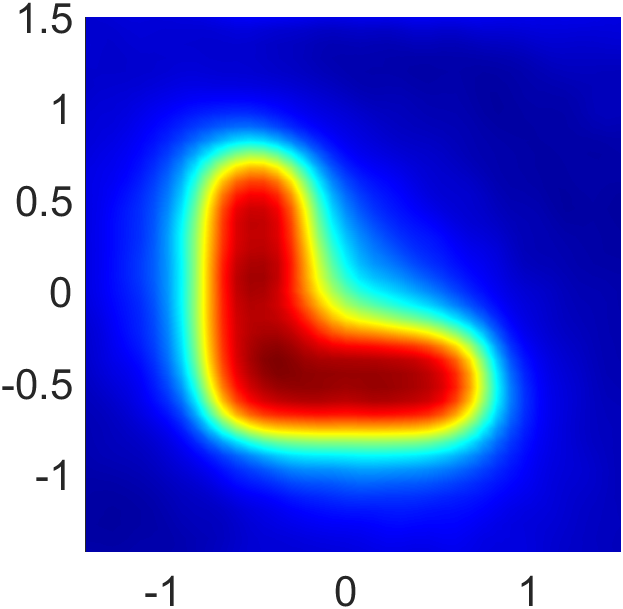}};
    \begin{scope}[x={(image.south east)},y={(image.north west)}]
      \node[anchor=north] at (.5,0) {\phantom{\footnotesize{$x$}}};
    \end{scope}
  \end{tikzpicture}}
  \caption{Reconstruction results of an L-shaped scatterer with synthetic data at $k=12$: (a) true scatterer; (b) reconstruction with $30\%$ noise; (c) reconstruction with $50\%$ noise; (d)--(f) 2D cross-sectional views of (a)--(c) on $\{y=0\}$, respectively.}
       \label{fig:simulated}
  \end{figure}
\subsection{Raw experimental data}
In this section, we validate the performance of the MOSM against unprocessed experimental data provided by the Fresnel Institute, France \cite{Geffrin2009}. This dataset offers a comprehensive range of three-dimensional targets with frequencies spanning $3-8$ GHz and has been widely used to test different inversion methods, see \cite{Litma2009}.  The details of the experimental configurations are available at \cite{Geffrin2009}.  The targets considered here include TwoSpheres, TwoCubes, %Cylinder, 
CubeSpheres, and IsoSphere, all with co-polarization. They were illuminated by $81$ incident fields from directions uniformly distributed over a sphere. Due to the technical and practical constraints of the experimental setup, the total fields were measured at only $36$ points uniformly located at the intersection of the sphere and the plane $\{z=0\}$. The noisy scattered field is obtained by subtracting the incident field from the raw total field in the dataset. Then, the far-field pattern can be computed from the scattered field using \eqref{eq:farfield}, since the distance between the targets and the receivers is sufficiently large to ensure that the approximation in \eqref{eq:farfield} holds.
Note that only the third component of the scattered field was given. 
This limited-aperture data presents a significant challenge for traditional sampling methods, which typically require full-aperture data. %When computing the integrals to evaluate the imaging function, the points where no data is present are treated as $0$.

 We rescale $40$ mm to be $1$ unit of length in our simulations. The sampling domain is the cube $[-2.5, 2.5]^3$, with $32$ sampling points uniformly divided on each side. We also replicate the true targets in MATLAB to compare with our reconstruction results. An isovalue of 0.5 was used for 3D isosurface plotting of the real data, consistent with the synthetic data. % rather than reconstructing their values.
%lower free more sensitive to noise

The main challenge when working with this raw experimental dataset is that, since the targets are very small compared to the wavelength, the scattered field is significantly weaker than the measured total and incident fields. This implies that any variations, such as the presence of noise, between two measurements, can completely disrupt the scattered field. At lower frequencies, the data becomes even more sensitive to noise.
To address this, a common approach involves a two-step procedure to clean the data, see \cite{Geffrin2009}. In the first step, drift correction enhances data quality %and produces smoother results 
by minimizing the spectral bandwidth of the scattered field. However, this approach requires the scattered field to have a limited spectral bandwidth. The second step involves applying calibration, which is less suitable for domain reconstruction methods. In contrast, our MOSM method uses directly raw experimental data without requiring any pre-processing steps, making it more practical and easy to implement.  As shown in Figures \ref{fig:twospheres}--\ref{fig:mystery}, the MOSM method achieves reasonable inversion results with only slight differences compared to results obtained using processed data in \cite{Le2022}, which indicates its high robustness against noisy data. Additionally, its computation is Mast and completed in under 30 seconds.

For each target, we present results using a wavenumber selected to match the one used in \cite{Le2022}, which provides one of the most reasonable reconstructions among those available in the dataset. It is worth noting that our method employs only a single wavenumber.
  {Figure \ref{fig:twospheres} shows MOSM results compared to the classical factorization method (FM) \cite{Kirsch1998} and the original OSM \cite{Harris2020} for the TwoSpheres target.
The MOSM accurately inverts the location and shape of the spheres at $4$ GHz ($k \approx 3.35$). Moreover, consistent with the findings in \cite{Le2022} for processed real 3D data, the MOSM outperforms the other two methods in inverting the unprocessed data.} For the raw CubeSpheres dataset, the proposed method is also able to determine geometrical information about the targets, as shown in Figure \ref{fig:twocubes}.
%Its result for the Cylinder-PP dataset also looks more reasonable than that of the. The cylinder object has the largest volume among all the targets and is challenging to reconstruct, see notice in [12]. Only the data set at 3 GHz gives a reasonable result for this target.
Next, Figures \ref{fig:cubespheres} and \ref{fig:mystery} illustrate the reconstruction results for the CubeSpheres and IsocaSphere datasets, respectively. Among all the targets, the CubeSpheres dataset features the finest geometric details. Additionally, the IsocaSphere dataset, also referred to as the Mystery object, is the most complex in the database, consisting of twelve small spheres arranged less evenly than those in the CubeSpheres dataset. Despite the sparse, limited-aperture of the real data along the 
$z$-axis and the complexity of the target geometries,
the MOSM can quickly recover the targets’ locations and sizes while providing a rough estimate of their shapes in the $(x,y)$-plane.

The real data corresponds to the case of isotropic media with $\eps_r = \eps_r(x), \mu_r = I_3,\xi=\eta =0$ in $\Omega$, which requires the assumption that the wavenumbers used are not transmission eigenvalues for the analysis of the MOSM. Identifying these eigenvalues is a nontrivial task \cite{Cakoni2016}, and as such, we cannot directly verify this assumption. However, the chance of encountering these transmission eigenvalues is expected to be low, as the set of such eigenvalues is known to be at most discrete.
%[may cut this] As mentioned in \cite{Geffrin2009}, reciprocity can be implemented by reversing the roles of the source and receiver when a single polarization is preferred for the incident wave, see cite[]. In this study, we enrich the dataset by using the reciprocity relation and present the corresponding results.

%Can put axis labels for crosssections, 
 %  \begin{figure}
  %     \centering
   %    \includegraphics[width=3.5cm]{fig/truekey1.png}\hspace{1cm}
    %    \includegraphics[width=3.5cm]{fig/MOSM_key_k16_30noise_iso1p5.png}  \hspace{1cm}
     %   \includegraphics[width=3.5cm]{fig/FM_key_30noise_1.png} 
      %  \vspace{0.5cm}
       % \\
        %\includegraphics[width=3.5cm]{fig/truekey.png}\hspace{1cm}
        %\includegraphics[width=3.5cm]{fig/MOSM_key_k16_30noise.png}  \hspace{1cm}\includegraphics[width=3.5cm]{fig/FM_key_30noise.png} 
       %\caption{MOSM vs FM simulated key-shaped, $30\%$ noise, $k=16$, divisor=1.5}
 %  \end{figure}
%%%%%%%%%%%%%%%%%%%%%%%%%%%   
\begin{figure}[h]
        \centering
     \subfloat[True scatterer]{
    \begin{tikzpicture}
    \node[anchor=south west,inner sep=0] (image) at (0,0) {\includegraphics[width=4.cm, trim={3cm 0.cm 3cm 0.cm}, clip]{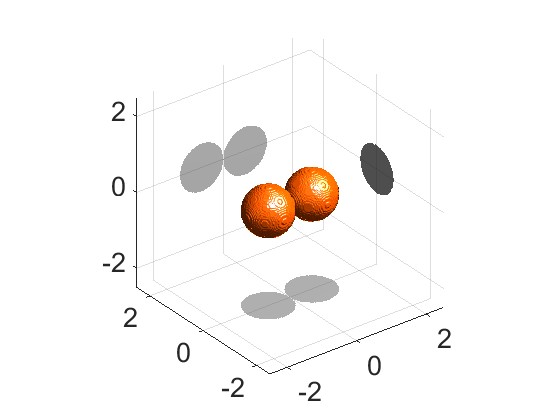}};
    \begin{scope}[x={(image.south east)},y={(image.north west)}]
      \node[anchor=north] at (.75,0.11) {{\footnotesize{$x$}}};
      \node[anchor=south] at (0.2,.05) {\footnotesize{$y$}};
      \node[anchor=south] at (0.1,0.8) {\footnotesize{$z$}};      
    \end{scope}
      \end{tikzpicture}}
          \hspace{0.1cm}
          \subfloat[MOSM]{
    \begin{tikzpicture}
    \node[anchor=south west,inner sep=0] (image) at (0,0) { \includegraphics[width=4.cm, trim={3cm 0.cm 3cm 0.cm}, clip]{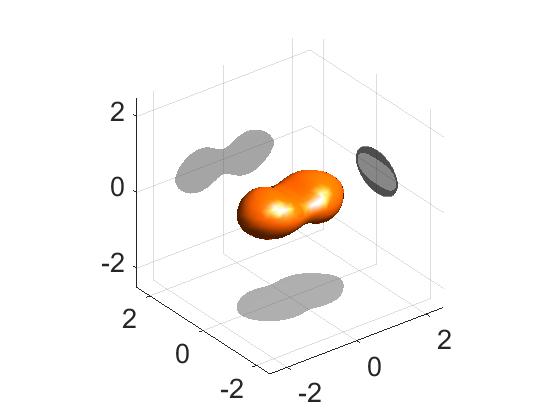}};
    \begin{scope}[x={(image.south east)},y={(image.north west)}]
      \node[anchor=north] at (.75,0.11) {\phantom{\footnotesize{$x$}}};
      \node[anchor=south] at (0.2,.05) {\phantom{\footnotesize{$y$}}};
      \node[anchor=south] at (0.15,0.8) {\phantom{\footnotesize{$z$}}};      
    \end{scope}
      \end{tikzpicture}}
       \hspace{0.05cm}
         \subfloat[$\mathcal{I}_{MOSM}$ on $\{z=0\}$]{
  \begin{tikzpicture}
    \node[anchor=south west,inner sep=0] (image) at (0,0) {\includegraphics[width=3.5cm, trim={3cm 0.cm 3cm 0.cm}, clip]{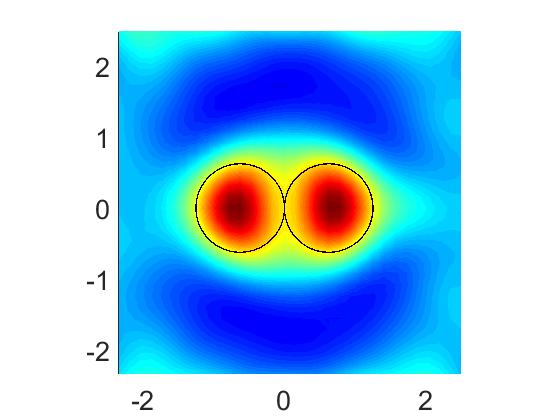}};
    \begin{scope}[x={(image.south east)},y={(image.north west)}]
      \node[anchor=north] at (.53,0.03) {\footnotesize{$x$}};
      \hspace{0.2cm}
      \node[anchor=south] at (-0.08,.44) {\footnotesize{$y$}};
    \end{scope}
  \end{tikzpicture}}
   \hspace{0.15cm}
            \subfloat[$\mathcal{I}_{MOSM}$ on $\{y=0\}$]{
  \begin{tikzpicture}
    \node[anchor=south west,inner sep=0] (image) at (0,0) {\includegraphics[width=3.5cm, trim={3cm 0.cm 3cm 0.cm}, clip]{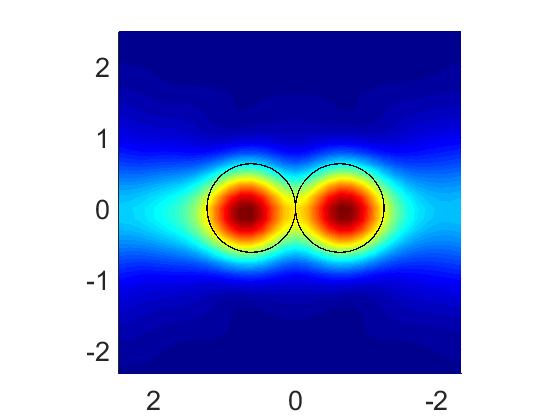}  };
    \begin{scope}[x={(image.south east)},y={(image.north west)}]
      \node[anchor=north] at (.53,0.03) {\footnotesize{$x$}};
      \hspace{0.2cm}
      \node[anchor=south] at (-0.08,.45) {\footnotesize{$z$}};
    \end{scope}
  \end{tikzpicture}}     
    \\
  \subfloat[FM]{
    \begin{tikzpicture}
    \node[anchor=south west,inner sep=0] (image) at (0,0) { \includegraphics[width=4.cm, trim={3cm 0.cm 3cm 0.cm}, clip]{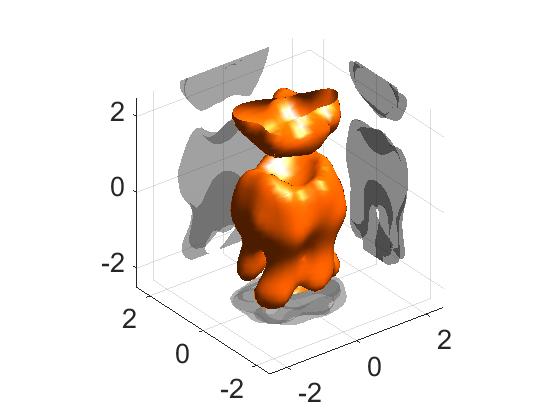}};
    \begin{scope}[x={(image.south east)},y={(image.north west)}]
      \node[anchor=north] at (.75,0.11) {\phantom{\footnotesize{$x$}}};
      \node[anchor=south] at (0.2,.05) {\phantom{\footnotesize{$y$}}};
      \node[anchor=south] at (0.15,0.8) {\phantom{\footnotesize{$z$}}};      
    \end{scope}
      \end{tikzpicture}}
      \hspace{1cm}
 \subfloat[OSM]{
    \begin{tikzpicture}
    \node[anchor=south west,inner sep=0] (image) at (0,0) { \includegraphics[width=4.7cm, trim={3cm 0.cm 3cm 0.cm}, clip]{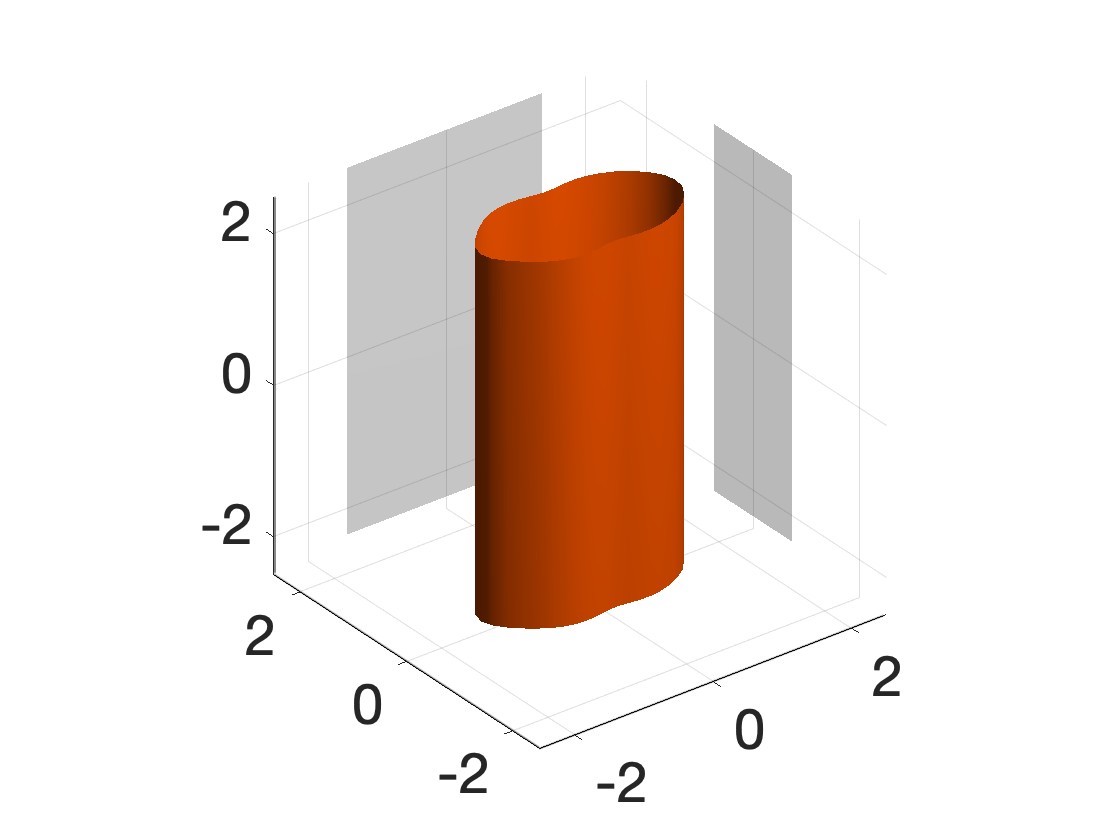}};
    \begin{scope}[x={(image.south east)},y={(image.north west)}]
      \node[anchor=north] at (.75,0.11) {\phantom{\footnotesize{$x$}}};
      \node[anchor=south] at (0.2,.05) {\phantom{\footnotesize{$y$}}};
      \node[anchor=south] at (0.15,0.8) {\phantom{\footnotesize{$z$}}};      
    \end{scope}
      \end{tikzpicture}}
  % \subfloat[$\mathcal{I}_{MOSM}$ on $\{y=0\}$]{
   %\includegraphics[width=3.7cm, trim={3cm 0.cm 3cm 0.cm}, clip]{fig/rawMOSM_twospheres_f4_1.jpg}    }
      %\includegraphics[width=4cm]{fig/rawFM_twospheres_f4_2.jpg} 
        %\includegraphics[width=4cm]{}  
        %\includegraphics[width=4cm]{} 
       \caption{  {MOSM, FM and OSM reconstruction of TwoSpheres using raw experimental data at $4$ GHz. }}
       \label{fig:twospheres}
   \end{figure}   
%%%%%%%%%%%%%%%%%%%%%%%%%%%    
%%%%%%%%%%%%%%%%%%%%%%%%%%%   
\begin{figure}[h]
        \centering
     \subfloat[True scatterer]{
    \begin{tikzpicture}
    \node[anchor=south west,inner sep=0] (image) at (0,0) {\includegraphics[width=4.cm, trim={3cm 0.cm 3cm 0.cm}, clip]{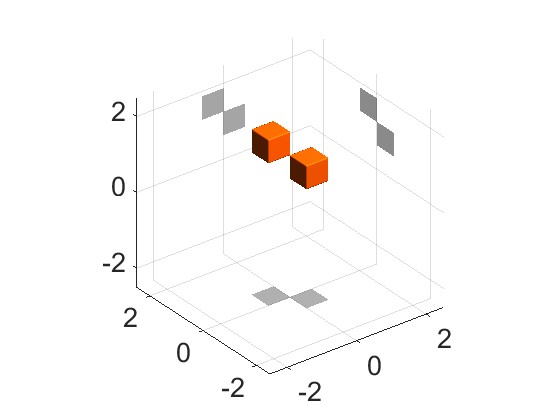}};
    \begin{scope}[x={(image.south east)},y={(image.north west)}]
      \node[anchor=north] at (.75,0.11) {{\footnotesize{$x$}}};
      \node[anchor=south] at (0.2,.05) {\footnotesize{$y$}};
      \node[anchor=south] at (0.1,0.8) {\footnotesize{$z$}};      
    \end{scope}
      \end{tikzpicture}}
          \hspace{0.1cm}
          \subfloat[MOSM]{
    \begin{tikzpicture}
    \node[anchor=south west,inner sep=0] (image) at (0,0) { \includegraphics[width=4.cm, trim={3cm 0.cm 3cm 0.cm}, clip]{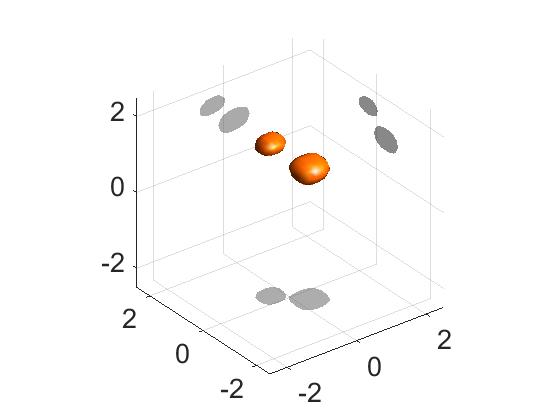}};
    \begin{scope}[x={(image.south east)},y={(image.north west)}]
      \node[anchor=north] at (.75,0.11) {\phantom{\footnotesize{$x$}}};
      \node[anchor=south] at (0.2,.05) {\phantom{\footnotesize{$y$}}};
      \node[anchor=south] at (0.15,0.8) {\phantom{\footnotesize{$z$}}};      
    \end{scope}
      \end{tikzpicture}}
       \hspace{0.05cm}
         \subfloat[$\mathcal{I}_{MOSM}$ on $\{z=0.9375\}$]{
  \begin{tikzpicture}
    \node[anchor=south west,inner sep=0] (image) at (0,0) {\includegraphics[width=3.5cm, trim={3cm 0.cm 3cm 0.cm}, clip]{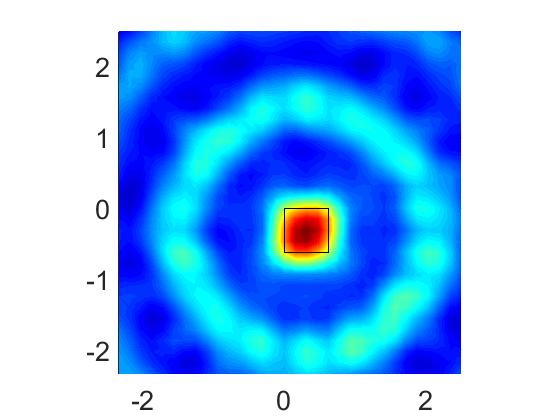} };
    \begin{scope}[x={(image.south east)},y={(image.north west)}]
      \node[anchor=north] at (.53,0.03) {\footnotesize{$x$}};
      \hspace{0.2cm}
      \node[anchor=south] at (-0.08,.44) {\footnotesize{$y$}};
    \end{scope}
  \end{tikzpicture}}
%%  
   %       \subfloat[$\mathcal{I}_{MOSM}$ on $\{z=0.9375\}$]{
   %\includegraphics[width=3.7cm, trim={3cm 0.cm 3cm 0.cm}, clip]{fig/rawMOSM_twocubes_f7p5.jpg}   }
           \hspace{0.15 cm}
            \subfloat[$\mathcal{I}_{MOSM}$ on $\{z=1.5625\}$]{
  \begin{tikzpicture}
    \node[anchor=south west,inner sep=0] (image) at (0,0) {\includegraphics[width=3.5cm, trim={3cm 0.cm 3cm 0.cm}, clip]{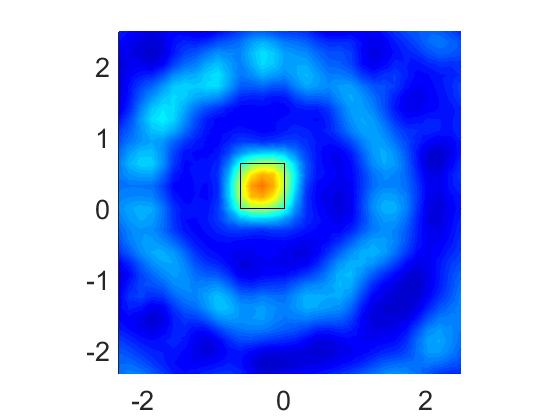}  };
    \begin{scope}[x={(image.south east)},y={(image.north west)}]
      \node[anchor=north] at (.53,0.03) {\footnotesize{$x$}};
      \hspace{0.2cm}
      \node[anchor=south] at (-0.08,.44) {\footnotesize{$y$}};
    \end{scope}
  \end{tikzpicture}}
 %  \hspace{0.35cm}
  % \subfloat[$\mathcal{I}_{MOSM}$ on $\{z=0.5625\}$]{
   %\includegraphics[width=3.7cm, trim={3cm 0.cm 3cm 0.cm}, clip]{fig/rawMOSM_twocubes_f7p5_1.jpg}    }
       \caption{MOSM reconstruction of TwoCubes using raw experimental data at $7.5$ GHz. }
      \label{fig:twocubes}
   \end{figure}   
\begin{figure}[h]
        \centering
     \subfloat[True scatterer]{
    \begin{tikzpicture}
    \node[anchor=south west,inner sep=0] (image) at (0,0) {\includegraphics[width=4.cm, trim={3cm 0.cm 3cm 0.cm}, clip]{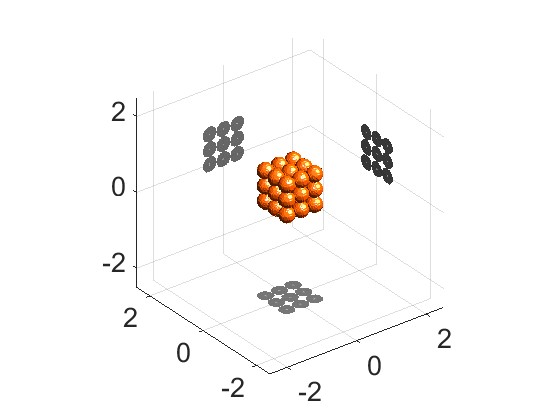}};
    \begin{scope}[x={(image.south east)},y={(image.north west)}]
      \node[anchor=north] at (.75,0.11) {{\footnotesize{$x$}}};
      \node[anchor=south] at (0.2,.05) {\footnotesize{$y$}};
      \node[anchor=south] at (0.1,0.8) {\footnotesize{$z$}};      
    \end{scope}
      \end{tikzpicture}}
          \hspace{0.1cm}
          \subfloat[MOSM]{
    \begin{tikzpicture}
    \node[anchor=south west,inner sep=0] (image) at (0,0) { \includegraphics[width=4.cm, trim={3cm 0.cm 3cm 0.cm}, clip]{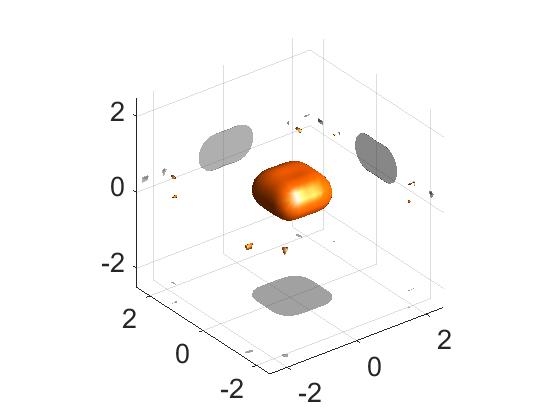}};
    \begin{scope}[x={(image.south east)},y={(image.north west)}]
      \node[anchor=north] at (.75,0.11) {\phantom{\footnotesize{$x$}}};
      \node[anchor=south] at (0.2,.05) {\phantom{\footnotesize{$y$}}};
      \node[anchor=south] at (0.15,0.8) {\phantom{\footnotesize{$z$}}};      
    \end{scope}
      \end{tikzpicture}}
       \hspace{0.05cm}
            \subfloat[$\mathcal{I}_{MOSM}$ on $\{z=0.3975\}$]{
  \begin{tikzpicture}
    \node[anchor=south west,inner sep=0] (image) at (0,0) {\includegraphics[width=3.5cm, trim={3cm 0.cm 3cm 0.cm}, clip]{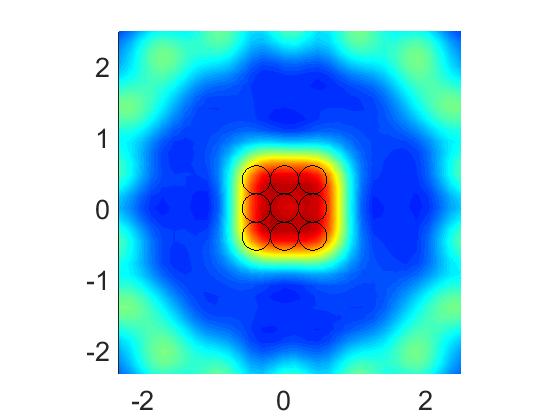}  };
    \begin{scope}[x={(image.south east)},y={(image.north west)}]
      \node[anchor=north] at (.53,0.03) {\footnotesize{$x$}};
      \hspace{0.2cm}
      \node[anchor=south] at (-0.08,.44) {\footnotesize{$y$}};
    \end{scope}
      \end{tikzpicture}}
       \hspace{0.15cm}
            \subfloat[$\mathcal{I}_{MOSM}$ on $\{y=0\}$]{
  \begin{tikzpicture}
    \node[anchor=south west,inner sep=0] (image) at (0,0) {\includegraphics[width=3.5cm, trim={3cm 0.cm 3cm 0.cm}, clip]{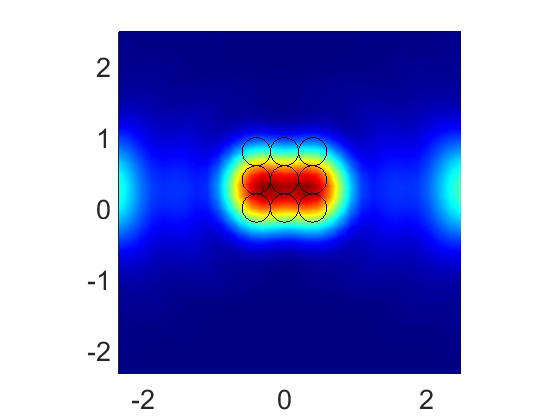}  };
    \begin{scope}[x={(image.south east)},y={(image.north west)}]
      \node[anchor=north] at (.53,0.03) {\footnotesize{$x$}};
      \hspace{0.2cm}
      \node[anchor=south] at (-0.08,.44) {\footnotesize{$z$}};
    \end{scope}
      \end{tikzpicture}}
       \caption{MOSM reconstruction of CubeSpheres using raw experimental data at $4.75$ GHz. }
       \label{fig:cubespheres}
   \end{figure}  
%%%%%%%%%%%%%%%%%%%%%%%%%%%
\begin{figure}[h]
        \centering
     \subfloat[True scatterer]{
    \begin{tikzpicture}
    \node[anchor=south west,inner sep=0] (image) at (0,0) {\includegraphics[width=4.cm, trim={3cm 0.cm 3cm 0.cm}, clip]{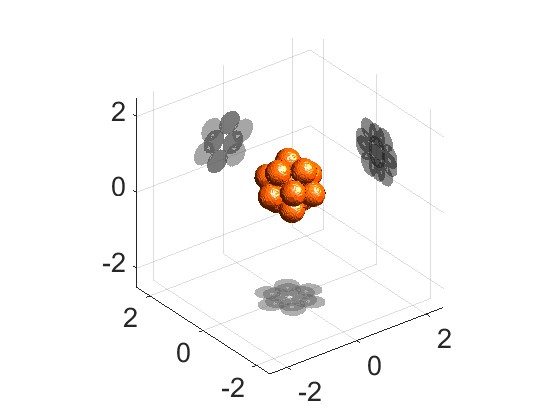}};
    \begin{scope}[x={(image.south east)},y={(image.north west)}]
      \node[anchor=north] at (.75,0.11) {{\footnotesize{$x$}}};
      \node[anchor=south] at (0.2,.05) {\footnotesize{$y$}};
      \node[anchor=south] at (0.1,0.8) {\footnotesize{$z$}};      
    \end{scope}
      \end{tikzpicture}}
          \hspace{0.1cm}
          \subfloat[MOSM]{
    \begin{tikzpicture}
    \node[anchor=south west,inner sep=0] (image) at (0,0) { \includegraphics[width=4.cm, trim={3cm 0.cm 3cm 0.cm}, clip]{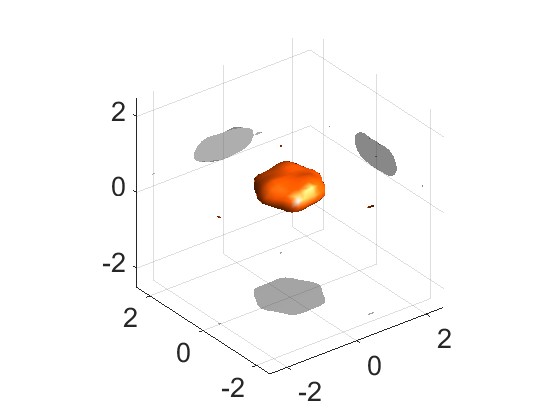}};
    \begin{scope}[x={(image.south east)},y={(image.north west)}]
      \node[anchor=north] at (.75,0.11) {\phantom{\footnotesize{$x$}}};
      \node[anchor=south] at (0.2,.05) {\phantom{\footnotesize{$y$}}};
      \node[anchor=south] at (0.15,0.8) {\phantom{\footnotesize{$z$}}};      
    \end{scope}
      \end{tikzpicture}}
       \hspace{0.05cm}
            \subfloat[$\mathcal{I}_{MOSM}$ on $\{z=0.3435\}$]{
  \begin{tikzpicture}
    \node[anchor=south west,inner sep=0] (image) at (0,0) {\includegraphics[width=3.5cm, trim={3cm 0.cm 3cm 0.cm}, clip]{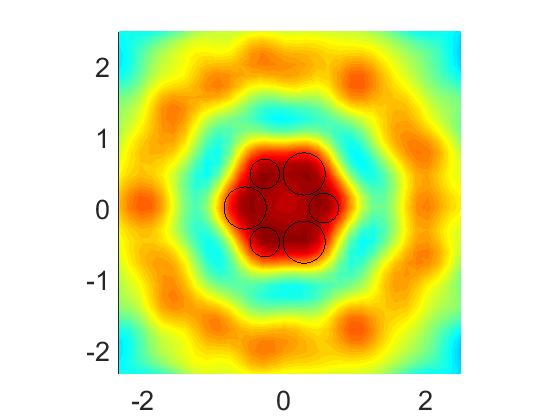}  };
    \begin{scope}[x={(image.south east)},y={(image.north west)}]
      \node[anchor=north] at (.53,0.03) {\footnotesize{$x$}};
      \hspace{0.2cm}
      \node[anchor=south] at (-0.08,.44) {\footnotesize{$y$}};
    \end{scope}
      \end{tikzpicture}}
       \hspace{0.15cm}
            \subfloat[$\mathcal{I}_{MOSM}$ on $\{z=0\}$]{
  \begin{tikzpicture}
    \node[anchor=south west,inner sep=0] (image) at (0,0) {\includegraphics[width=3.5cm, trim={3cm 0.cm 3cm 0.cm}, clip]{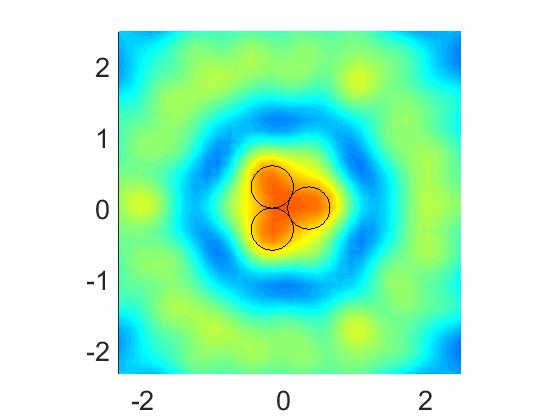}  };
    \begin{scope}[x={(image.south east)},y={(image.north west)}]
      \node[anchor=north] at (.53,0.03) {\footnotesize{$x$}};
      \hspace{0.2cm}
      \node[anchor=south] at (-0.08,.44) {\footnotesize{$y$}};
    \end{scope}
      \end{tikzpicture}}
       \caption{MOSM reconstruction of IsocaSphere using raw experimental data at $5.75$ GHz.}
       \label{fig:mystery}
   \end{figure} 
   \section{Conclusion}
   \label{se: conclude}
{ Through the factorization analysis of the far-field operator, we establish the uniqueness of the inverse problem for Maxwell's equations in reconstructing bianisotropic targets from multi-static far-field data. The factorization analysis also helps justify the resolution and stability of the MOSM. The sampling method's resolution is within the diffraction limit. Numerical experiments on synthetic data with varying noise levels confirm the method’s effectiveness and robustness. Moreover, we validate the sampling method on unprocessed 3D experimental datasets from the Fresnel Institute. Although the proposed method theoretically requires  full aperture data, it can be directly applied to these sparse, limited-aperture datasets without requiring any pre-processing. These results indicate that the proposed method is practical, easy and fast to implement, while maintaining robustness and accuracy even in the presence of high noise levels in the data. }

\vspace{0.5cm}
\textbf{Acknowledgment.} This work was partially supported by NSF Grant DMS-2208293.

{\footnotesize
%\small
\bibliographystyle{plain}
\bibliography{Imylib}
}
\end{document}